\documentclass[11pt]{article}

\usepackage{amsmath,amsthm,amssymb,amsbsy,amsfonts,graphics,epsfig,graphicx,mathabx}
\usepackage{color}

\setlength{\textwidth}{125mm}
\setlength{\evensidemargin}{.5in}
\setlength{\textheight}{195mm}

\usepackage{etoolbox}
\let\bbordermatrix\bordermatrix
\patchcmd{\bbordermatrix}{8.75}{4.75}{}{}
\patchcmd{\bbordermatrix}{\left(}{\left[}{}{}
\patchcmd{\bbordermatrix}{\right)}{\right]}{}{}

\newcommand{\C}{{\mathbb C}}

\newcommand{\Z}{{\mathbb Z}}
\newcommand{\N}{{\mathbb N}}

\newcommand{\M}{\mathcal{M}}
\newcommand{\Oo}{{\cal O}}

\newcommand{\beq}{\begin{equation}}
\newcommand{\eeq}{\end{equation}}
\newcommand{\bea}{\begin{eqnarray*}}
\newcommand{\eea}{\end{eqnarray*}}
\newcommand{\beqa}{\begin{eqnarray}}
\newcommand{\eeqa}{\end{eqnarray}}
\newcommand{\bmat}{\begin{bmatrix}}
\newcommand{\emat}{\end{bmatrix}}
\newcommand{\ba}{\begin{align}}
\newcommand{\ea}{\end{align}}
\newcommand{\bas}{\begin{align*}}
\newcommand{\eas}{\end{align*}}
\newcommand{\tit}{\textit}

\newcommand{\tr}{\mbox{Tr}}

\newcommand{\dsp}{\displaystyle}

\newcommand{\KMS}{Kac-Murdock-Szeg\H{o}}

\newcommand{\ve}{\varepsilon}

\newtheorem{theo}{{\bf Theorem}}[section]
\newtheorem*{theo*}{{\bf Theorem}}
\newtheorem{cor}[theo]{{\bf Corollary}}
\newtheorem{lem}[theo]{{\bf Lemma}}
\newtheorem{prob}{{\bf Problem}}
\newtheorem{defn}[theo]{{\bf Definition}}

\theoremstyle{definition}

\newtheorem{rem}[theo]{{\bf Remark}}

\newcommand{\bth}{\begin{theo}}
\renewcommand{\eth}{\end{theo}}
\newcommand{\bdefi}{\begin{defn}}
\newcommand{\edefi}{\end{defn}}
\newcommand{\bprob}{\begin{prob}}
\newcommand{\eprob}{\end{prob}}

\numberwithin{equation}{section}

\pagestyle{myheadings}

\author{Alain Bourget\thanks{\textbf{Mailing address}: Department of Mathematics, California State University (Fullerton),
McCarthy Hall 154, Fullerton CA 92834 (US). \textbf{Email address}: abourget@fullerton.edu, allenalvarez@csu.fullerton.edu,  tmcmillen@fullerton.edu }, Allen Alvarez Loya\footnotemark[1] \ and Tyler McMillen\footnotemark[1] \footnote{Corresponding author.}}

\title{Spectral Asymptotics for  \KMS \ Matrices}


\begin{document}

\maketitle

\bibliographystyle{plain}
 
\begin{abstract}

Szeg\H{o}'s First Limit Theorem provides the limiting statistical distribution (LSD) of the eigenvalues of large Toeplitz matrices.  Szeg\H{o}'s Second (or Strong)  Limit Theorem for Toeplitz matrices gives a second order correction to the First Limit Theorem, and allows one to calculate asymptotics for the determinants of large Toeplitz matrices.  In this paper we survey results extending the first and strong limit theorems to Kac-Murdock-Szeg\H{o} (KMS) matrices.  These are matrices whose entries along the diagonals are not necessarily constants, but modeled by functions.   We clarify and extend some existing results, and explain some apparently contradictory results in the literature. 
\end{abstract}

\bigskip

\noindent
{\bf Keywords}: Toeplitz matrices, Kac-Murdock-Szeg\H{o} matrices,  Szeg\H{o}'s Limit Theorem, Schr\"{o}dinger operators, determinants
\bigskip

\noindent
{\bf AMS subject classifications}: 15B05, 47B06, 47B35, 35P20

\newpage
\tableofcontents

\section{Introduction}
\subsection{The problem and its history}

For any  function $a$ of two variables with Fourier series
\begin{equation} \label{FS1}
a(x,t) =\sum_{k \in \Z} \hat{a}_k(x) e^{ikt},
\end{equation}
Kac, Murdock and Szeg\H{o} \cite{kamusz53, grsz58} introduced in 1953 what they called \tit{generalized Toeplitz} matrices   as the matrices of the form
\beq 
T_n(a) = \left[ \hat{a}_{j-i} \left(\frac{i+j}{2n+2}\right) \right]_{i,j=0}^{n}
\label{KMStype}
\eeq
We will call matrices of this form \KMS \ (KMS) matrices, although this is not the generally accepted term, as we will explain below.  Such matrices have sometimes been called \tit{generalized Toeplitz}, \tit{Toeplitz-like}, \tit{twisted Toeplitz}, \tit{variable coefficient Toeplitz} matrices, and probably some other terms which we have yet to run across.  In addition,  Tilli \cite{ti98b},  motivated by applications to differential equations, introduced what he called \tit{locally Toeplitz} matrices, which are closely related to  a special class of KMS matrices (see more below).     As the above mentioned terms are somewhat ambiguous, we prefer the term KMS.  

Note that when $a(x,t)=a(t)$ is independent of $x$, $T_n(a)$ is Toeplitz.  Toeplitz matrices can be characterized by the condition that the differences between elements on the $k$th diagonal are zero:
\[T_n(a)_{j+1,j+k+1}  - T_n(a)_{j,j+k} = 0 
\] 
for all $j,k$ where the terms are defined.  KMS matrices have the property that the differences between elements on the $k$th diagonal approach zero as $n\rightarrow\infty$:
\begin{align*} 
T_n(a)_{j+1,j+k+1} - T_n(a)_{j,j+k} 
 & = \hat{a}_{k}\left(\frac{2j+k}{2n+2} + \frac{1}{n+1}\right) -  \hat{a}_{k}\left(\frac{2j+k}{2n+2}\right) 
\\
& = o(1)
\end{align*}
as long as the $\hat{a}_k$ are continuous.  KMS matrices are thus natural generalizations of Toeplitz matrices, and they are `locally Toeplitz' in this sense.  As long as the functions $\hat{a}_k$ are continuous, or even Riemann integrable, locally they are not too far from being Toeplitz.  For this reason, much of the machinery used in the study of Toeplitz matrices can be applied to KMS matrices, as long as one is careful to keep track of the error terms.

\begin{rem}
The definition \eqref{KMStype} has the advantage that $T_n(a)$ will be Hermitian exactly when $a(x,t)$ is real valued.  However, in many applications, the indexing is not always so neat.
The indexing in \eqref{KMStype} of $(i+j)/(2n+2)$ along the $(j-i)$th diagonal is not necessary for the First Theorem \eqref{First KMS} on eigenvalue distributions to hold.  Sampling the functions $\hat{a}_k$ along the diagonals at any partition whose mesh size approaches zero will lead to the same \emph{limiting statistical distribution (LSD)} of the eigenvalues (see \S\ref{partition}).
  Indeed,
Kac, Murdock and Szeg\H{o} also considered matrices of the form
\[  \left[ \hat{a}_{j-i} \left(\frac{\min\{i,j\}}{n+1}\right) \right]_{i,j=0}^{n}
\quad \text{ and } \quad
 \left[ \hat{a}_{j-i} \left(\frac{\max\{i,j\}}{n+1}\right) \right]_{i,j=0}^{n}
 \]
These matrices, as the one in \eqref{KMStype},  will also be Hermitian if and only if the symbol $a$ is real-valued.  And, they have the same LSD as those defined by \eqref{KMStype}.
However, while the asymptotic eigenvalue distribution is independent of how the indexing is done,
 the asymptotics of the determinants of $T_n(a)$ are extremely sensitive to the way in which the indexing is done (see \S\S\ref{DSOsection} and Remark~\ref{shiftremark}).  
\end{rem}

The main result of the original paper of Kac, Murdock and Szeg\H{o} is concerned with proving a generalized First Szeg\H{o}'s Limit Theorem for matrices of the type \eqref{KMStype}.
In that paper  they also included a chapter on extreme eigenvalues of Toeplitz matrices of the form 
$$\left[\rho^{|j-i|}\right]_{i,j=0}^n, 
$$
where $0<\rho < 1$.  It is this kind of Toeplitz matrix that has often been referred to as a KMS matrix in the literature.  In particular,  in a series of papers, Trench \cite{tr02} generalized this notion to a broader class of Toeplitz matrices and obtained asymptotic results for the spectra in some cases where the First Szeg\H{o}'s Limit Theorem does not apply.  However, it is, in our opinion, a misnomer to refer to these as KMS matrices, when they are, after all, Toeplitz matrices.


\smallskip

KMS and related matrices have received  a lot of attention lately with applications to such various fields as statistical mechanics \cite{ka13,demc98}, differential equations \cite{bosh08, ti98b, ti99b, se00, mcboag09, shtata12, comase16}, quantum integrable systems \cite{agbo08, bomc09, bo10} and the Heisenberg group \cite{budihiwi16, budihiwi16b}, among others.     There has been a renewed interest in matrices of this type which followed a renewed interest in Toeplitz forms and their connections to orthogonal polynomials.  The history of citations of \cite{kamusz53} illustrates the general trend.  MathSciNet currently lists 77 citations for \cite{kamusz53}.  Seven of these were from papers published from 1958 to 1965.  There were no citations between the years 1965 and 1999.  After these 34 years of dormancy, from 1999 on there have been 70 citations.  Curiously, only a few of those papers citing \cite{kamusz53} dealt with matrices of the form \eqref{KMStype}.   Google Scholar, which keeps a more comprehensive collection of citations including from math and non-math journals, lists 209 citations for \cite{kamusz53}.  Google Scholar shows more citations in the years 1965 to 1999, but the same general trend of relatively few citations until near the end of the 20th century, followed by renewed interest at the turn of the century.  Figure~\ref{cites} shows the number of citations per year from each source.

\smallskip

\begin{figure}[h]
\begin{center}
\includegraphics*[height=1.8in]{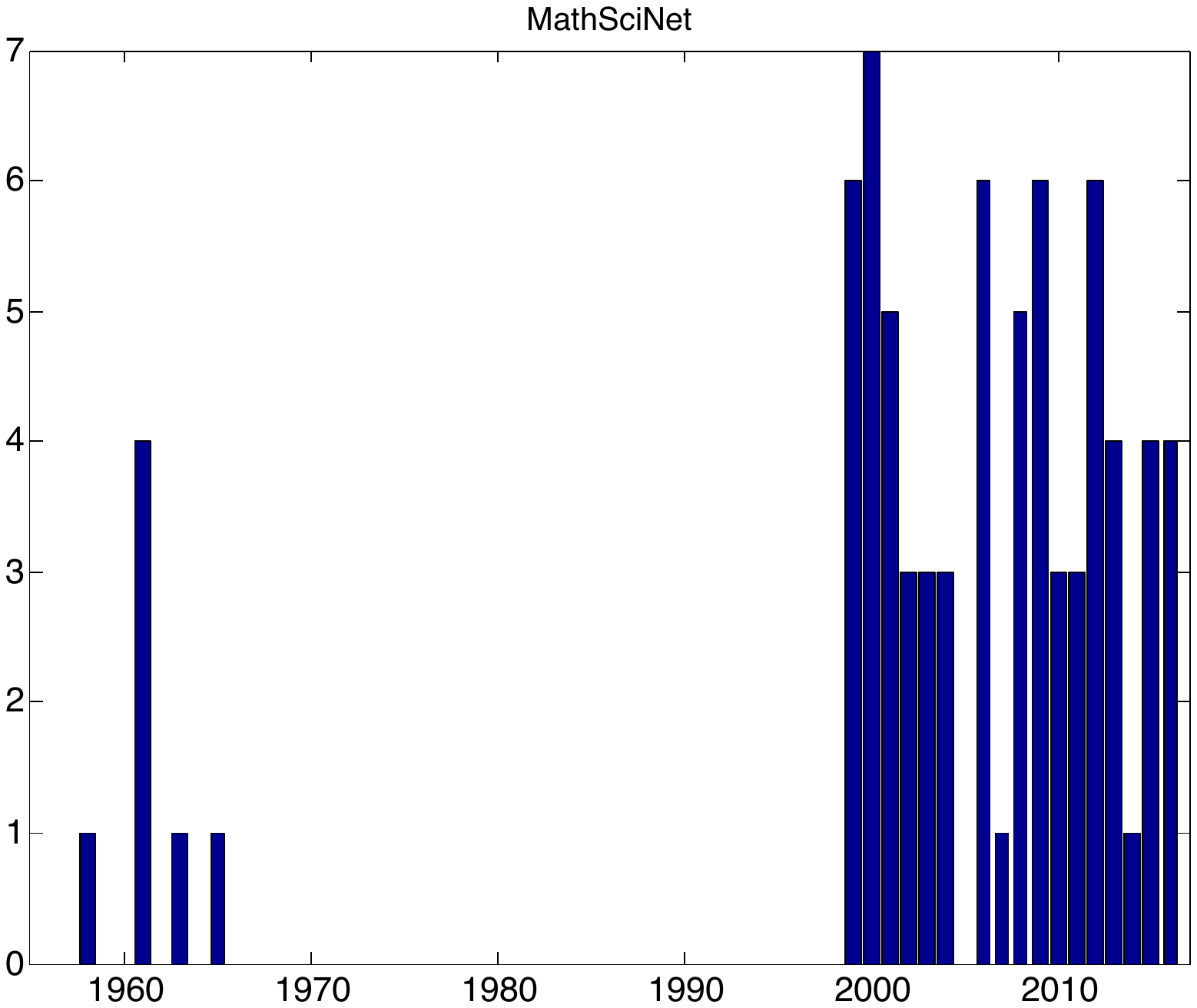}
\hskip .1in
\includegraphics*[height=1.8in]{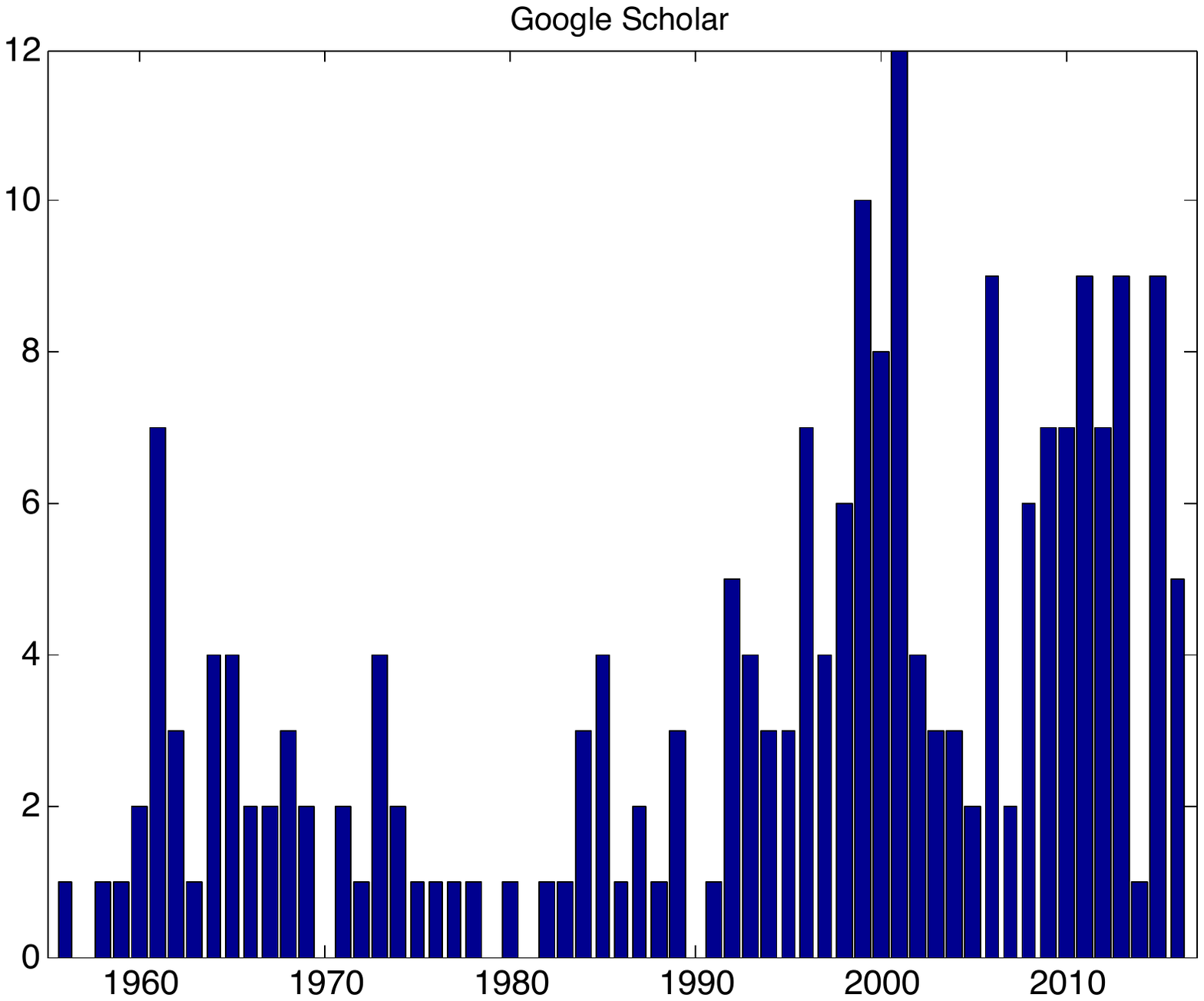}
\end{center}
\caption{Citations of \cite{kamusz53} by year, according to MathSciNet (left) and Google Scholar (right).}
\label{cites}
\end{figure}

In the early 1960's, Mejlbo and Schmidt \cite{mesc61, mesc62} proved a Strong Szeg\H{o}'s Theorem for KMS matrices, which we discuss at length below and in \S\ref{sec.strong}.  After the mid-1960's, interest in such matrices waned.  One notable exception is the paper of Trotter \cite{tr84}, who in 1984 generalized the results of Kac, Murdock and Szeg\H{o} to symbols in which the functions $\hat{a}_k$ are only Riemann integrable.
Interest in spectral asymptotics of KMS matrices and their like renewed toward the end of the 20th century.  In 1998, Shao  \cite{sh98} generalized the First Limit Theorem to symbols of bounded variation.  That same year, Deift and McLaughlin \cite{demc98} considered sequences of Jacobi matrices with variable coefficients in the context of the Toda lattice.  Also in 1998, Tilli \cite{ti98b} introduced locally Toeplitz matrices.  Tilli originally considered matrices that arise in the discretization of certain differential equations, which are, modulo finite rank perturbations, asymptotic to KMS matrices with symbols $a(x,t)=f(x)g(t)$.  He derived a First Szeg\H{o}'s Theorem for such matrices, in effect rederiving the result of Kac, Murdock and Szeg\H{o} for a special case.    This notion of locally Toeplitz has been generalized by several authors and applied quite fruitfully to the study of differential equations, notably by Serra-Capizzano.  (See  \cite{se03,se06}, and \cite{gase16} for a  review of recent work on locally Toeplitz matrices and applications to differential equations.)

\smallskip 

In 1999, Kuijlaars and Van Assche \cite{kuva99} published a paper on the asymptotic density of zeros of orthogonal polynomials with continuously varying coefficients.  Such zeros  are  eigenvalues of symmetric tridiagonal KMS matrices whose diagonal entries are modeled by continuous functions.  This has become a highly cited and influential paper.  This was followed by a paper by Kuijlaars and Serra Capizzano \cite{kuse01} that extended the result to diagonals modeled by Riemann integrable functions.  
Kuijlaars et.al. obtained a formula for the asymptotic density of eigenvalues, whereas Kac, Murdock and Szeg\H{o} calculated the LSD via an integral (eqn \ref{First KMS} below).  However, both results give essentially the same information, and it is a trivial exercise to extract the density from the LSD in the Jacobi case.  The proof of Kuijlaars and Van Assche used powerful tools from potential theory, and were completely different from the moments method employed by Kac, Murdock and Szeg\H{o}.  Still, their results were essentially a special case of results obtained nearly 50 years earlier using more elementary tools.

\smallskip

It is apparent that the results of Kac, Murdock and Szeg\H{o} were not well known in the mathematical community.    This includes, until quite recently, the authors of the present paper, who undertook to write this article partly in order to rectify this situation.  Indeed, the first author would have benefitted greatly from the knowledge of \cite{kamusz53} when writing the articles \cite{agbo08, agbo09, bo12} on generalizations of tridiagonal KMS matrices. 
The main purpose of the present paper is to collect and clarify results on the statistical distribution of eigenvalues of KMS matrices as $n\rightarrow\infty$ (the limiting statistical distribution, or LSD).  We will be able to extend some of the known results.  We will also explain some of the results in the literature that appear to be contradictory.  
The paper is mostly self-contained.  For completeness, we will include most of the proofs.  We have endeavored to present the proofs that are the clearest and most powerful.

\subsection{The First and Second Szeg\H{o}'s Limit Theorems}

Suppose the symbol $a$ in \eqref{FS1} is real-valued, so that $T_n(a)$ is Hermitian.  Suppose, further, that the functions $\hat{a}_k$ are continuous on $[0,1]$ and satisfy the decay condition
\begin{equation} \label{decay}
 \vvvert a \vvvert := \sum_{k \in \Z} \|\hat{a}_k\|_\infty < \infty.
\end{equation}
By Gershgorin's disks Theorem, the spectrum of $T_n(a)$ lies inside the closed interval $[-\vvvert a \vvvert,   \vvvert a \vvvert ] $. 
Under these condition, Kac, Murdock and Szeg\H{o} \cite{kamusz53} proved the following First Limit Theorem for KMS matrices in 1953:
\begin{equation} \label{First KMS}
  \text{Tr}[\varphi(T_n(a))] = \frac{n+1}{2\pi} \int_0^1 \int_0^{2\pi} \varphi(a(x,t)) \, dt \, dx + o(n) \qquad (n \to \infty)
\end{equation}  
for any continuous function $\varphi$ on  $ [-\vvvert a \vvvert, \vvvert a \vvvert]$.  When the symbol $a$ is independent of $x$, i.e. $a(x,t)=a(t)$, the result in \eqref{First KMS} reduces to the First Szeg\H{o}'s Limit Theorem for Toeplitz matrices. 

\smallskip

The formula \eqref{First KMS} can be written in the equivalent form
\[ \lim_{n\rightarrow\infty} \frac{\sum_{j=1}^{n+1}\varphi\left(\lambda_j(T_n(a))\right) }{n+1} = \frac{1}{2\pi} \int_0^1 \int_0^{2\pi} \varphi(a(x,t)) \, dt \, dx
\]
where $\lambda_j(T_n(a)) \ (j=1,\dots, n+1)$ are the eigenvalues of $T_n(a)$.   
The result \eqref{First KMS} thus gives us the LSD of the eigenvalues of $T_n(a)$.  It says, roughly, that as $n \to \infty$, the eigenvalues of $T_n(a)$ distribute like the values of $a(x,t)$ sampled at regularly spaced points in the rectangle $0\leq t\leq 2\pi, \ 0\leq x\leq 1$.  Alternatively, \eqref{First KMS} is equivalent to the following.  Let $\alpha < \beta$ and let $N(n;\alpha,\beta)$ be the number of eigenvalues $\lambda$ of $T_n(a)$ satisfying $\alpha\leq \lambda \leq \beta$.  Then 
\[ \lim_{n\rightarrow\infty} \frac{N(n;\alpha,\beta)}{n+1} = \frac{\sigma}{2\pi}
\]
where $\sigma$ is the area of the sub-domain in the rectangle $0\leq t\leq 2\pi, \ 0\leq x\leq 1$ such that $\alpha\leq a(x,t)\leq\beta$.

\smallskip

If one makes the additional assumptions $a(x,t) >0$ and  $\log a(x,t) \in L^1([0,1] \times [0,2\pi])$, then \eqref{First KMS} for $\varphi(x) = \log x$ reads as
\begin{equation*}
 \log \det T_n(a) =  \frac{n+1}{2\pi} \int_0^1 \int_0^{2\pi} \log(a(x,t)) \, dt \, dx  + o(n) 
 \end{equation*}
When $a(x,t)$ is independent of $x$, the Szeg\H{o}'s Second--or Strong--Limit Theorem gives the more precise statement  \cite{grsz58, bogr05}:
 \begin{equation} 
 \label{Strong Szego1}
  \log  \det (T_n(a)) = \frac{n+1}{2\pi}  \int_0^{2\pi} \log(a(t)) \, dt   + E(a) + o(1)
\end{equation} 
where $E(a)$ is the constant 
\[E(a)=  \sum_{k =1}^{\infty} k  \widehat{(\log a)}_k\widehat{(\log a)}_{-k}
\]
and 
\[\widehat{(\log a)}_k = \frac{1}{2\pi}\int_0^{2\pi} \log(a(t)) e^{-ikt} dt
\]
denotes the $k$th Fourier coefficient of $\log a$.   The Strong Szeg\H{o}'s Theorem \eqref{Strong Szego1}  is more often expressed in the form
\beq 
\lim_{n \to \infty} \frac{\det T_n(a)}{ G(a)^{n+1}} = \exp\left[ E(a) \right]
\label{SS2}
\eeq
where 
\[G(a) = \exp\left[ \frac{1}{2\pi} \int_0^{2\pi} \log a(t) \, dt\right] 
\]
is the geometric mean of $a$.


In the early 60's, Mejlbo and Schmidt \cite{mesc61, mesc62}  gave the following extension of Szeg\H{o}'s Strong Theorem to KMS matrices. Suppose $a$ is a complex-valued symbol such that $\hat{a}_k \in C^2([0,1])$ and satisfy the conditions 
\begin{equation*} \label{MSC}
\sum_{k \in \Z} \| \hat{a}_k\|_\infty<1, \  \sum_{k \in \Z} \|\hat{a}'_k\|_\infty < \infty, \   \sum_{k \in \Z} \|\hat{a}^{''}_k\|_\infty < \infty, \text{ and } \sum_{k \in \Z} |k|^\alpha \|\hat{a}_k\|_\infty < \infty
\end{equation*}
for some $\alpha>2$. Suppose, further, that $a$ satisfies the diagonal dominance condition $\min |\hat{a}_0| > \sum_{ k \neq 0} \| \hat{a}_k\|_\infty$.  Then, we have
\begin{eqnarray} \label{Strong KMS}
 \lim_{n \to \infty}  \frac{\det T_n(a)}{G(a)^{n+1}}   = \exp \frac{1}{2}\{  e(a;0)-e(a;1) +E(a;0)+E(a;1) \} 
\end{eqnarray}
where $G(a)$, $e(a;x)$ and $E(a;x)$ are now given by 
\beq 
G(a) = \exp\left[\frac{1}{2\pi} \int_0^1 \int_0^{2\pi} \log(a(x,t)) \, dt \, dx \right], 
\label{GeE1}
\eeq
and
\begin{align} \label{GeE2}
e(a;x) = \frac{1}{2\pi} \int_0^{2\pi} \log a(x,t) \, dt, 
\end{align}
\begin{align}
E(a;x) =   \sum_{k =1}^{\infty} k  \, \widehat{(\log a)}_k(x) \, \widehat{(\log a)}_{-k}(x).
\label{GeE3}
\end{align}

Of course, when $a$ does not depend on $x$, $E(a;x) = E(a)$, and Mejlbo and Schmidt's result coincides with the Szeg\H{o} Strong Theorem. Quite remarkably, the error term obtained by Mejlbo and Schmidt in \eqref{Strong KMS} depends only on the values of the symbol $a$ at the endpoints  $x=0$ and $x=1$.

%
\subsection{Content of the present paper}

In the next two sections,  we prove a First Limit Theorem and a Strong Limit Theorem for KMS matrices under some minor improvements of the results mentioned above. For the sake of consistency and simplicity,  both proofs use the moments method  rather than the probabilistic method of Trotter \cite{tr84} in the First Limit Theorem or the operator method of Ehrhardt and Shao \cite{ehsh01} for the Strong Theorem. \S\ref{sec.first} deals with the First Limit Theorem.  In addition to proving this result for KMS matrices, we include a section on the asymptotic distribution of singular values.  When the matrices $T_n(a)$ are normal, we obtain the LSD of the eigenvalues.  When $T_n(a)$ is not normal, we cannot obtain the LSD.  However, we can say that most of the eigenvalues will accumulate in the extended range of the symbol $a$.  This is the content of \S\ref{sec.clustering}.

\smallskip

\S\ref{sec.strong} deals with the Strong Limit Theorem.  We prove this theorem along the same lines as Mejlbo and Schmidt.  We then explain the results of Ehrhardt and Shao, who proved a similar theorem, but obtained a different formula from that of Mejlbo and Schmidt.  We explain where the difference arises, and why their methods cannot be applied to KMS matrices.  The last subsections of \S\ref{sec.strong} deal with the discrete Schr\"{o}dinger operator, i.e the special KMS matrix ``discrete Laplacian $+$ diagonal'' of the form
\[ -\left[ \delta_{j,j+1} + \delta_{j+1,j} \right]  + \mbox{diag}\left( f(1/n), f(2/n), \dots, f(n/n)\right)
\]  
We report here on recent results for the asymptotics of determinants of such matrices.  These results are extensions of a beautiful result of Kac from 1969 \cite{ka69}.  One such extension is that if one shifts the indexing, one can adjust the limit of the determinant without changing the LSD of the eigenvalues.  Another extension deals with asymptotics when $f$ has jump discontinuities.  In that case, the determinant approaches a value (modulo $o(1)$ terms) that depends on $n$ in a peculiar way.  These results demonstrate the extreme sensitivity of the determinant to small changes, and why the limit of $\det(T_n(a))/G(a)^{n+1}$ cannot exist when the symbol has discontinuities.

We conclude with some open problems and conjectures.  

\section{First Limit Theorems}
\label{sec.first}

We present several First Limit Theorems for sequences of normal and non-normal KMS matrices.  Our presentation follows the original approach of Kac, Murdock and Szeg\H{o}  for some obvious reasons.  First,  their argument is in our opinion the most elegant and simplest one. Second, it allows straightforward generalizations to sequences of matrices with alternative indexing (see \S\ref{partition}) and to sequences of block matrices (see \S\ref{block}). Their arguments can also be easily modified to obtain the LSD of the singular values  (see \S\ref{singular}). We end in \S\ref{sec.clustering} with a clustering property for the spectrum of $T_n(a)$.  

\smallskip

\subsection{Normal matrices}
\label{normal}

Let $a(x,t)$ be a complex-valued function on $[0,1] \times [0,2\pi]$ and let $a_t$, $a_x$ be the functions defined by $a_t(x)=a(x,t)=a_x(t)$. Throughout \S\ref{normal}-\S\ref{sec.clustering}, we assume $a_t$ is Riemann integrable on $[0,1]$ and $a_x \in L^\infty ([0,2\pi])$.  As a consequence of Gershgorin's disks Theorem and condition \eqref{decay}, the spectrum of every $T_n(a)$ lies inside the disk $D_a=\{z \in \C : |z|  \leq \vvvert a \vvvert\}$.

Our first result is concerned with the LSD of sequences of normal KMS matrices. This was first obtained by Trotter \cite{tr84} using some non-trivial probability argument. We give here a more elementary proof based on Kac-Murdock-Szeg\H{o} approach.

\begin{theo} \label{normal}
Let $\{ T_n(a) \}$ be a sequence of normal KMS matrices with symbol $a$ as in \eqref{FS1} and for which \eqref{decay} holds. Then,
\begin{equation}  \label{KMS1eq}
    \lim_{n \to \infty} \frac{\text{Tr}[\varphi(T_n(a))] }{n+1} = \frac{1}{2\pi} \int_0^1 \int_0^{2\pi} \varphi(a(x,t)) \, dt \, dx
\end{equation}
for any $\varphi \in C(D_a)$.
\end{theo}

\begin{proof}  The normality of $T_n(a)$ ensures that 
$$ \tr[ T_n^p T_n^{*q} ] = \sum_{j=1}^{n+1} \lambda_j^p (T_n(a))  \, \overline{\lambda_j^q(T_n(a))} \qquad (p,q \in \N).$$
By the Stone-Weierstrass Theorem, the polynomials in $z$ and $\bar{z}$ are dense in the space of continuous functions on $D_a$.  Consequently, it suffices to prove the result when $\varphi(z)=z^p \bar{z}^q$ with $p, q \in \N$. For simplicity, we write $a_{ij}$ for the entries of $T_n(a)$, and $T_n$ for $T_n(a)$. We have 
\beq 
\label{trace 1}
    \tr[ T_n^p T_n^{*q} ]  =  \sum_{i=0}^{n} \sum_{j_1,...,j_{p+q}=0}^n a_{i,j_1} a_{j_1,j_2} \cdots a_{j_{p-1},j_p} \bar{a}_{j_{p+1},j_p} \cdots \bar{a}_{i, j_{p+q}}.
\eeq     
Let $h=(h_1,...,h_p)$ and $k=(k_1,...,k_q)$ be the multi-indices defined by 
$$h_1=j_1-i, \ h_2=j_2-j_1, \ldots, h_\alpha=j_p - j_{p-1}$$ 
and 
$$k_1=j_{p+1}-j_p, \ldots, k_{q-1}= j_{p+q}-j_{p+q-1}, \ k_q= j-i_{p+q}.$$ 
Adding the above relations, we see that $h$ and $k$ must satisfy the condition $|h|=|k|$. Hence, we can rewrite \eqref{trace 1} as
\beq
\label{trace 2}
    \text{Tr}[ T_n^r T_n^{*s}  ]  =  \sum_{i=0}^{n} \sideset{}{'}\sum_{|h|=|k|}  \prod_{l=1}^p \hat{a}_{h_l} \left( \frac{2i+\nu_l}{2n+2} \right) \, \prod_{m=1}^q \bar{\hat{a}}_{k_m} \left( \frac{2i+\nu_{p+m}}{2n+2} \right) 
\eeq
where 
$$ \nu_j = \begin{cases}
                     2h_1+2h_2+\cdots + 2h_{j-1}+h_j  & \text{ if } 1 \leq j \leq p \\
                     \nu_p -2k_1 -2k_2 - \cdots - 2k_{j-1} - k_j & \text{ if } p+1 \leq j \leq p+q.
                \end{cases}$$  
The prime on the middle sum of \eqref{trace 2} is used to indicate that only multi-indices  satisfying $ -2i \leq \nu_j \leq n-2i$ must enter into the summation.  

Let $\varepsilon>0$. By condition \eqref{decay}, there exists $r_0$ large enough so that  
$$ \sum_{|k| >r} \| \hat{a}_k \|_\infty < \varepsilon \qquad (\forall r>r_0).$$ 
Pick  $r>r_0$ and let $M(h,k)=\max\{|h_l|,|k_m|\}$. It follows that 
\begin{align}
  &  \sum_{i=0}^{n} \sideset{}{'}\sum_{ \substack{ |h|=|k| \\M(h,k) >r}}  \prod_{l=1}^p \hat{a}_{h_l} \left( \frac{2i+\nu_l}{2n+2} \right) \, \prod_{m=1}^q \bar{\hat{a}}_{k_m} \left( \frac{2i+\nu_{p+m}}{2n+2} \right)   \nonumber \\
     & \qquad \qquad  \leq \sum_{i=0}^n \left( \sum_{|k|>r} \|\hat{a}_k \|_\infty \right) \ \left( \sum_{k \in \Z} \|\hat{a}_k\|_\infty \right)^{p+q-1} \nonumber \\
     & \qquad \qquad \leq (n+1) \, \vvvert a \vvvert^{p+q-1} \ \varepsilon,
\end{align}
i.e. the above sum is $o(n)$. Hence, it suffices to consider in \eqref{trace 2}   multi-indices $h$ and $k$ for which $|h|=|k|$ and $M(h,k) \leq r$. For such multi-indices, we have
\begin{align*}
    \sum_{|h|=|k|} & =   \sideset{}{'}\sum_{|h|=|k|} +  \sum_{ \substack{|h| = |k| \\ \min \nu_j <-2i \\ \max \nu_j <n-2i}} + \sum_{\substack{|h|=|k| \\ \max \nu_j > n-2i}} \\
      & =  \sideset{}{'}\sum_{|h|=|k|} +\,  o(n)
 \end{align*}   
since $M(h,k) \leq r$. Hence, we can replace \eqref{trace 2} by
\begin{equation*} 
    \text{Tr}[ T_n^p T_n^{*q} ]  =  \sum_{i=0}^{n} \sum_{ \substack{|h|=|k| \\ M(h,k) \leq r}}  \prod_{l=1}^p \hat{a}_{h_l} \left( \frac{2i+\nu_l}{2n+2} \right) \, \prod_{m=1}^q \bar{\hat{a}}_{k_m} \left( \frac{2i+\nu_{p+m}}{2n+2} \right) +o(n).
\end{equation*}

We now use the fact that the functions $\hat{a}_k$ are Riemann integrable on $[0,1]$ and $|\nu_j| =\Oo(1)$, so we can shift - up to an error of $o(n)$ - each of the expressions inside the functions $\hat{a}_k$  to obtain
\begin{align*} 
    \text{Tr}[ T_n^p T_n^{*q} ] & =  \sum_{i=0}^{n} \sum_{ \substack{|h|=|k| \\ M(h,k) \leq r}} \prod_{l=1}^p \hat{a}_{h_l} \left( \frac{i}{n+1} \right) \, \prod_{m=1}^q \bar{\hat{a}}_{k_m} \left( \frac{i}{n+1} \right)  +o(n)  \\
     & =   (n+1) \int_0^1 \sum_{ \substack{|h|=|k| \\ M(h,k) \leq r}} \prod_{l=1}^p \hat{a}_{h_l} \left( x \right) \, \prod_{m=1}^q \bar{\hat{a}}_{k_m}  (x) \, dx + o(n) 
     \label{riemsum} \\
     & =  \frac{n+1}{2\pi} \int_0^1 \int_0^{2\pi} \ a^p_r (x,t) \ \overline{a^q_r (x,t)} \, dt \, dx +o(n)
\end{align*}
where $a_r$ is the trigonometric polynomial given by
$$ a_r(x,t)  = \sum_{|k| \leq r} \hat{a}_k(x) \, e^{ikt}.$$
By condition \eqref{decay}, $a_r$ converges uniformly to $a$ on $[0,1] \times [0,2\pi]$ as $r \to \infty$. Therefore, we conclude
\begin{align*}
  \text{Tr}[ T_n^p T_n^{*q} ] =  \frac{n+1}{2\pi} \int_0^1 \int_0^{2\pi} \ a^p (x,t) \ \overline{a^q (x,t)} \, dt \, dx +o(n)
\end{align*}
as desired.   
\end{proof}

Obviously, the above argument breaks down when $T_n(a)$ is not normal, and hence we are unable to compute the LSD of arbitrary sequences of KMS matrices. However, we can easily prove the following weaker result. 

\begin{theo} \label{First Analytic}
Let $\{T_n(a)\}$ be a sequence of KMS matrices that satisfies condition \eqref{decay}. Then, we have
\begin{equation} \label{analyticeq1}
    \lim_{n \to \infty} \frac{ \text{Tr}[\varphi(T_n(a))] }{n+1}= \frac{1}{2\pi} \int_0^1 \int_0^{2\pi} \varphi(a(x,t)) \, dt \, dx
\end{equation}
for any analytic function $\varphi$ on the closed disk $D_{a}$.
\end{theo}

\begin{proof}
From the proof of Theorem \ref{normal} with $q=0$, we deduce that 
$$   \text{Tr}[ T_n^p(a) ] =  \frac{n+1}{2\pi} \int_0^1 \int_0^{2\pi} \ a^p (x,t) \, dt \, dx +o(n)$$
for every $p \in \N$. Note that the normality is not needed in this case. The conclusion then follows from Mergelyan's Theorem that asserts that polynomials are dense in the space of analytic functions on $D_a$.  
\end{proof}

\subsection{Alternative indexing}
\label{partition}

We start with a result that should be fairly obvious from the proof of Theorem ~\ref{normal}.  There we used the indexing by $(i+j)/(2n+2)$ along the $(j-i)$th diagonal to form a Riemann sum.  But, any partition whose mesh size approaches zero will accomplish the same, so the strict indexing in the definition \eqref{KMStype} is not necessary for Theorem \ref{normal} to hold.  Indeed, any partition will do.  The partition points can also be shifted by finite amounts.  The following theorems, whose proofs are omitted, should thus be intuitive.  We refer the reader to \cite[Cf. Theorem~3.6 and Corollary~3.7]{bomc15} for detailed proofs of some of the results.

\smallskip

\begin{defn} 
Let $\{ \mathcal{P}_n \}$ be a sequence of partitions of $[0,1]$ such that
$$ \mathcal{P}_n=\left\{x_{0}^{(n)},x_{1}^{(n)},...,x_{n+1}^{(n)} \right\}$$ and whose meshes satisfy $\| \mathcal{P}_n \|= o(1)$ as $n \to \infty$. To every $\mathcal{P}_n$, we associate $(2n+1)$-tagged partitions 
$\{\mathcal{P}_{n;k}\}$ with
$$\mathcal{P}_{n;k} = \left\{ [x_j^{(n)},x_{j+1}^{(n)}] ; \xi_{j;k}^{(n)} \right\} \qquad (-n \leq k \leq n) . $$ 
We defined generalized KMS matrices as the matrices
$$ T_n(a)  = \bigg[ \hat{a}_{j-i}  \left( \xi_{i \wedge j;j-i}^{(n)} \right) \bigg]_{i,j=0}^n.$$
\end{defn}

For instance, it is not hard to see that KMS matrices and their variant (see Remark 1.1) can all be expressed in that form. 

\smallskip
 
The next results are straightforward extensions of Theorem \ref{normal} to sequences of generalized normal KMS matrices. They also extend previous results of Kuiljaars, Van Assche and Serra Capizzano \cite{kuva99,kuse01} for tridiagonal matrices.

 \begin{theo}  \label{altindex}
Let $\{T_n(a)\}$ be a sequence of generalized normal KMS matrices. Then,
for any $\varphi \in C(D_{a})$, we have
\begin{equation*}
   \lim_{n \to \infty} \frac{ \text{Tr}\left[\varphi(T_n(a))\right] }{n+1}  
   =  \frac{1}{2\pi}  \int_0^1 \int_{0}^{2\pi} \varphi( a(x,t))  \, dt \, dx.
\end{equation*}
In the non-normal case, $\varphi$ must be chosen to be analytic in $D_a$. 
\end{theo}

\smallskip

It is possible to give a  slightly more general result for small perturbations of $T_n(a)$. Indeed, let $\{A_n\}$ and $\{B_n\}$ be two sequences of normal matrices whose entries satisfy the estimate
$$ |a_{ij}^{(n)} - b_{ij}^{(n)} | = o \left(  n^{-1/2} \right).$$
The Wielandt-Hoffman inequality \cite{bh97} then implies 
$$ \frac{1}{n+1} \sum_{k=1}^{n+1} | \lambda_k(A_n) - \lambda_k(B_n) | \leq \frac{1}{\sqrt{n+1}} \| A_n-B_n\|_F  = o\left(1  \right).$$
In addition, if we assume that $\{A_n\}$ and $\{B_n\}$ satisfy a decay condition as \eqref{decay}, then $\{A_n\}$ and $\{B_n\}$ have the same LSD.

\begin{theo}  \label{altindex2}
Let $\left\{A_n \right\}$ with $A_n=\left[ a_{ij}^{(n)}\right]_{i,j=0}^n$ be a sequence of normal matrices that satisfies the decay condition
\[ \alpha  =\sup_n \left[ \sum_{k=0}^n \max_{0\leq j\leq n-k} \left|a_{j+k,j}^{(n)} \right| + \sum_{k=1}^n  \max_{0 \leq j \leq n-k}  \left|a_{j,j+k}^{(n)} \right|  \right] < \infty
\]
Suppose there exists a sequence $\{T_n(a)\}$ of generalized normal KMS matrices for which \eqref{decay} holds and 
\[  \left|a_{ij}^{(n)} - \hat{a}_{j-i}\left(\xi_{i \wedge j;j-i}^{(n)} \right)\right| = o\left( n^{-1/2} \right).
\]
If $M=\max\{ \alpha, \vvvert a \vvvert\}$,  then we have
\begin{equation*}
   \lim_{n \to \infty} \frac{ \text{Tr}\left[\varphi(A_n)\right] }{n+1} 
   =  \frac{1}{2\pi} \int_0^1 \int_{0}^{2\pi}  \varphi( a(x,t))  \, dt \, dx
\end{equation*}
for every $\varphi \in C(D_a)$. 
\end{theo}

\subsection{Block matrices}
\label{block}

The proofs of Theorems  \ref{normal} and \ref{First Analytic} -- and their extensions given above --  make no use of the product commutativity of the entries of $T_n(a)$. This naturally suggests to extend our class of symbols to matrix-valued ones.  That is, $a(x,t)$ is now a matrix $A(x,t)$ of some fixed order $k$.  In this context, $T_n(A)$ is then a block matrix of order $k(n+1)$ whose entries $A_{pq}$ are given by the $k \times k$ matrices
$$ A_{pq} = \hat{A}_{q-p} \left( \xi^{(n)}_{p \wedge q;q-p}  \right) = \frac{1}{2\pi} \int_0^{2\pi} A\left( \xi_{p \wedge q;q-p}^{(n)},t \right) \, e^{-i(q-p)t} \, dt.$$   
Obviously, one can also define generalized block KMS matrices by allowing the entries to be evaluated on some tagged partitions of $[0,1]$. Note that our definition includes as special cases the  class of locally Toeplitz matrices \cite{ti98b} and their generalizations \cite{se03}.

As before, we assume $A_t$ is Riemann integrable on $[0,1]$ and $A_x \in L^\infty ([0,2\pi])$ for any given $x$ and $t$.  In the block case, this is equivalent to say that each entry of $A(x,t)$ is Riemann integrable on $[0,1]$ in $x$ and bounded in $t$ on $[0,2\pi]$. Condition \eqref{decay} now reads as 
\begin{equation} \label{decay matrix}
\vvvert A \vvvert = \sum_{k \in \Z}  \ \sup_{x \in [0,1]} \| \hat{A}_k(x) \|_F <\infty.
\end{equation}
Of course, the choice of matrix norm is arbitrary -- for convenience, we picked the Frobenius norm -- any other matrix norm could also be chosen. We can now restate Theorems \ref{normal} and \ref{First Analytic} in the context of generalized block KMS matrices. 

\begin{theo}
Let $\{T_n(A)\}$ be a sequence of normal block generalized KMS matrices that satisfies condition \eqref{decay matrix}. If $A(x,t)$ is normal for all $x$ and $t$, then
\begin{equation*}
    \lim_{n \to \infty} \frac{\text{Tr}[\varphi(T_n(A))] }{n+1}= \frac{1}{2\pi} \int_0^1 \int_0^{2\pi} \text{Tr} [\varphi(A(x,t))] \, dt \, dx
\end{equation*}
for any $\varphi \in C(D_{kA})$. In the non-normal case, the function $\varphi$ must be analytic in the closed disk $D_{kA} = \{ z : |z| \leq k \vvvert A \vvvert\}$. 
\end{theo}

Similarly, we can extend the perturbation result in Theorem \ref{altindex2} to the block matrix case. Once again, we choose the Frobenius norm for convenience. 

\begin{theo} \label{block2}
Let $\left\{ A_n \right\}$ with $A_n=\left[ A_{ij}^{(n)}\right]_{i,j=0}^n$ be a sequence of normal block matrices. Suppose the entries $A_{ij}^n$ are normal and satisfies the decay condition
\begin{equation} \label{decay2}
\mathcal{A}:=\sup_n  \left[ \sum_{k=0}^n  \max_{0\leq j\leq n-k} \left\| A_{j+k,j}^{(n)} \right\|_F + \sum_{k=1}^n  \max_{0\leq j\leq n-k} \left\| A_{j,j+k}^{(n)} \right\|_F \right] < \infty
\end{equation}
Suppose, further,  there exists a sequence $\{T_n(A)\}$ of generalized normal block KMS matrices for which \eqref{decay matrix} holds and 
\[  \left\| A_{ij}^{(n)} - \hat{A}_{j-i}\left(t_{i \wedge j}^{(n)} \right)\right\|_F = o \left( n^{-1/2} \right). \]
Let $\mathcal{M}=\max\{ \mathcal{A}, \vvvert A \vvvert \}$. If $A(x,t)$ is normal for all $x$ and $t$, then we have
\begin{equation*}
   \lim_{n \to \infty} \frac{ \text{Tr}\left[\varphi(A_n)\right] }{n+1} 
   =  \frac{1}{2\pi} \int_0^1 \int_{0}^{2\pi} \text{Tr} \left[ \varphi( A(x,t))  \right] \, dt \, dx
\end{equation*}
for every $\varphi \in C(D_{k\mathcal{M}})$. In the non-normal case, $\varphi$ must be analytic in the closed disk $D_{k\mathcal{M}}  =\{ z: |z| \leq k\mathcal{M}\}$. 
\end{theo}

\subsection{Finite rank perturbations}

The results above also hold for  finite rank perturbations.  In the Toeplitz case, this was first observed by Tyrtyshnikov \cite{ty96}. Let $\{A_n\}$ and $\{B_n\}$ be two sequences of normal  block-matrices. Suppose the spectrum of $A_n-B_n$ satisfies the bound
$$\|A_n -B_n \|_\infty =  \sup_{1 \leq k \leq k(n+1)} |\lambda_k(A_n-B_n)| =o( \sqrt{n}) ,$$ 
and  
$$ \text{rank}(A_n-B_n) \leq r$$
for some $r$ independent of $n$. Then, the sequences $\{A_n\}$ and $\{B_n\}$ have the same LSD.  Indeed, the Wielandt-Hoffman inequality  implies 
\begin{align*}
  &  \frac{1}{n+1} \sum_{j=1}^{k( n+1) } | \lambda_j(A_n) - \lambda_j(B_n)| \\ 
  & \qquad  \qquad \leq \frac{\sqrt{k} }{\sqrt{n+1}} \  \| A_n-B_n \|_F  = o \left( 1 \right)
\end{align*}   
since $\|A_n-B_n\|_F = o( r  \sqrt{n}) = o(\sqrt{n})$.

%


\begin{theo} \label{block3}
Let $\{A_n\}$ be as in Theorem~\ref{block2} and let $\{C_n\}$ be a sequence of block matrices with $\|C_n\|_\infty= o (\sqrt{n})$ and $ \mbox{rank}\,(C_n) \leq r$
for some positive constant $r$.  If $B_n = A_n + C_n$ is normal for all $n$, then
\[
  \lim_{n \to \infty} \frac{ \text{Tr}\left[\varphi(B_n)\right] }{n+1}    =  \frac{1}{2\pi}  \int_0^1\int_{0}^{2\pi} \text{Tr} [ \varphi( A(x,t))]   \, dt \, dx
\]
for any $\varphi \in C(D_{k\mathcal{M}})$.
\end{theo}

\subsection{Singular value distribution}
\label{singular}

The distribution of the singular values of sequences of KMS matrices can be obtained fairly easily once we have the preceding machinery.  A similar result for sequences of Toeplitz matrices goes back to  Avram \cite{av88} and Parter \cite{pa86} (see also \cite{bogr05}). Shao \cite{sh04} obtained a similar result using operator methods.

\begin{theo}
\label{SVDth}
  Let $T_{m,n}(a)$ be the $(m+1) \times (n+1)$ rectangular KMS matrix whose $(j,k)$ entry is given by 
  \[  \hat{a}_{j-i} \left(\frac{i+j}{2\max\{m,n\}+2}\right) 
  \]
  where the $\hat{a}_k$'s satisfy condition \eqref{decay}. Let 
  $$\sigma_1(T_{m,n}(a)) \geq \sigma_2(T_{m,n}(a)) \geq \cdots \geq \sigma_{m \wedge n}(T_{n,m}(a)) \geq 0 $$
  be the $m \wedge n = \min\{m+1,n+1\}$ singular values of $T_{m,n}(a)$.  Then, we have
  $$ \lim_{m,n \to \infty} \frac{1}{ m \wedge n } \sum_{k=1}^{m \wedge n} \varphi(\sigma_k(T_{m,n}(a))) = \frac{1}{2\pi} \int_0^1 \int_{0}^{2\pi}  \varphi(|a(x,t)|) \, dt \, dx$$
  for every $\varphi \in C([0,\vvvert a \vvvert])$.
\end{theo}

\begin{proof}
One can easily modify the trace formula in Theorem \ref{normal}  to obtain 
$$ \text{Tr}[ (T_{m,n}(a) T_{m,n}^{*}(a))^p ] =  \frac{m\wedge n}{2\pi} \int_0^1 \int_0^{2\pi} |a (x,t)|^p  \, dt \, dx +o(n)$$
for any $p \in \N$. Since the singular values of $T_{m,n}(a)$ are the eigenvalues of the Hermitian matrix $T_{m,n}(a)T_{m,n}^{*}(a)$, it suffices to apply Weierstrass Approximation Theorem  to obtain the desired result.
\end{proof}

Theorems \ref{block2} and \ref{block3} on sequences of block matrices can also be restated for the singular values.   

\begin{theo}
(i) Let $\left\{ A_{m,n} \right\}$  be a sequence of block matrices whose entries $A_{ij}^{(n)}$ are $k \times k$ matrices that satisfy the condition \eqref{decay2}. Suppose there exists a sequence $\{T_{m,n}(A)\}$ of generalized block KMS matrices for which \eqref{decay matrix} holds and 
\[  \left\| A_{ij}^{(n)} - \hat{A}_{j-i}\left(\xi_{i\wedge j}^{(n)} \right)\right\|_F = o \left( n^{-1/2} \right). \]
Then, we have
$$ \lim_{m,n \to \infty} \frac{1}{ m \wedge n } \sum_{j=1}^{k(m \wedge n)} \varphi(\sigma_j(A_{m,n})) = \frac{1}{2\pi} \int_0^1 \int_{0}^{2\pi} \sum_{j=1}^k  \varphi( \sigma_j(A(x,t))) \, dt \, dx$$
for every $\varphi \in C([0,k \mathcal{M}])$.

\smallskip

(ii) Let $A_{m,n}$ be as above and let $\{C_{m,n}\}$ be a sequence of block matrices of bounded rank satisfying
\[ \sigma_1(C_{m,n}) = \Oo(1) \qquad \text{and} \qquad  \mbox{rank}\,(C_{m,n}) \leq r \]
for some constant $r$.  Let $B_{m,n} = A_{m,n}+C_{m,n}$.  Then
  $$ \lim_{m,n \to \infty} \frac{1}{ m \wedge n } \sum_{j=1}^{k(m \wedge n)} \varphi(\sigma_j(B_{m,n})) = \frac{1}{2\pi} \int_0^1 \int_{0}^{2\pi}  \sum_{j=1}^k  \varphi( \sigma_j(A(x,t))) \, dt \, dx$$
for every $\varphi \in C([0,k \mathcal{M}])$.
\end{theo}

\subsection{Eigenvalue clustering for non-normal matrices}
\label{sec.clustering}

When $T_n(a)$ is not normal, \eqref{analyticeq1} holds, but only for analytic $\varphi$.  Since continuous functions cannot be approximated by analytic functions in the complex plane, this does not give us the LSD.  Indeed, it is easy to construct matrices where \eqref{analyticeq1} fails for continuous $\varphi$.  For example, consider the Toeplitz matrix with symbol $a(t) = e^{it}+e^{-2it}$, i.e. the matrix $[ \delta_{j+1,j} + \delta_{j,j+2}]$.  The eigenvalues of $T_n(a)$ all lie on the star $\{ r\omega^j : \omega = e^{2\pi i/3}, r\geq 0,  j=0,1,2\}$ in the complex plane \cite{mc09}.  The spectrum remains bounded away from the range of the symbol (see figure~\ref{toeplitz_ex}).  Thus, we can find a continuous $\varphi$ that is zero on the spectrum of $T_n(a)$ and positive on a portion of the range of $a$ with positive measure, which would contradict \eqref{analyticeq1} if it held for continuous $\varphi$.

\begin{figure}[h]
\begin{center}
\includegraphics*[width=2in]{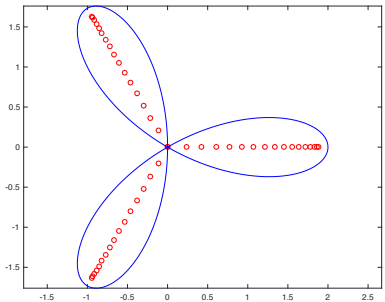}
\end{center}
\caption{Eigenvalues of $T_{50}(a)$ (circles) for $a(t) = e^{it}+e^{-2it}$, and  curve $a(t)$ for $t\in [0,2\pi]$.}
\label{toeplitz_ex}
\end{figure}

While we cannot deduce the LSD for non-normal KMS matrices, the symbol does give us some information about how the eigenvalues accumulate in the long run.  In this subsection, we will
extend a  result of Tilli \cite{ti99} about the clustering properties of the eigenvalues of Toeplitz matrices to KMS matrices. That is, we will  find a set in the complex plane such any of its neighborhoods contains all of the eigenvalues of $T_{n}(a)$, except at most $o(n)$ of them. We begin by defining the \textit{essential range} of the symbol $a(x,t)$, denoted $\mathcal{R}(a)$, by :

\begin{equation*}
\mbox{$\mathcal{R}(a):=\left\lbrace z \in \mathbb{C}: \mbox{meas} \left\lbrace a^{-1}(D_r(z)) \right\rbrace>0,\forall r>0\right\rbrace$},
\end{equation*}

\noindent 
That is, $\mathcal{R}(a)$ is the set of those points $z \in \mathbb{C}$ with the property that the Lebesgue measure of the inverse image of any open set containing $z$ is positive. $D_r(z)$ denotes an open disk in the complex plane with radius $r$ centered at $z$.

As before, we consider symbols that satisfy the decay condition \eqref{decay} and such that $a_t$ is Riemann integrable on $[0,1]$ and $a_x \in L^\infty( [0,2\pi])$. In such case,  $\mathcal{R}(a)$ is a compact set; hence its complement has just one unbounded connected component, and we can write

\begin{equation}
\mbox{$\mathbb{C} \backslash \mathcal{R}(a)=: U_{0}$  $\cup$ $\bigcup \limits_{j=1}^{\infty} U_{j}, \hspace{.25in} U_{i} \cap U_{j}=\emptyset$ if $ i\neq j  $},
\label{UNION}
\end{equation}
where each $U_{j}$, $j\geq 1$, is a connected bounded open set, and $U_{0}$ is an unbounded connected open set. Using (\ref{UNION}) we define the \textit{extended range} of the symbol $a$ as

$$\mathcal{ER}(a):=\mathbb{C} \backslash U_{0}
$$ 
Hence, the \textit{extended range} $\mathcal{ER}(a)$ is the union of the range of $a$ and all the bounded components of its complement. We can now state the main result of this section, which was proven by Tilli \cite{ti99} for Toeplitz matrices.  The proof that follows is only a minor modification of Tilli's.

\bth
\label{ClusterTheorem}
Let $a(x,t)$ be as above.  
Then, the extended range $\mathcal{ER}(a)$ is a cluster of the eigenvalues of $T_{n}(a)$.  That is, for any open set $V$ containing $\mathcal{ER}(a)$ there holds 
\begin{equation}
\lim \limits_{n\rightarrow \infty} \dfrac{\mathcal{N}(V,n)}{n+1}=1 ,
\label{Cluster}
\end{equation}
where $\mathcal{N}(V,n)$ is the number of  eigenvalues of $T_{n}(a)$ that lie inside $V$.  In other words,  any $\epsilon$-neighborhood of $\mathcal{ER}(a)$ contains all of the eigenvalues of $T_{n}$ except at most $o(n)$ of them.
\eth

\begin{proof}
Choose some $z \notin \mathcal{ER}(a)$. Since $\mathcal{ER}(a)$ is closed, there exists some small open disk $D$ centered at $z$ such that $\overline{D}$ $\cap$ $\mathcal{ER}(a)=\emptyset$. Let $K= \mathcal{ER}(a) \cup \overline{D}$ and define $F$ on $K$ as

$$F(\xi)=\left\{
               \begin{array}{ll}
               1 & $if $ \xi \in \overline{D}\\
               0 & $if $ \xi \in \mathcal{ER}(a).
               \end{array}
               \right.
$$

\noindent Since $\mathbb{C}\backslash K$ is connected, by the Mergelyan theorem we can uniformly approximate $F$ with a polynomial $P$, so that for any given $\varepsilon>0$,
$$|P(\xi)-F(\xi)| \leq \epsilon \qquad \qquad (\xi \in K).$$
Let $\mathcal{N}(D,n)$ be the number of eigenvalues of $T_{n}(a)$ inside $D$ and let $\chi_{D}$ be the characteristic function of $D$. Since $1-\epsilon \leq |P(\lambda)|$ whenever $\lambda \in D$, we have
\begin{align*}
(1-\epsilon) \ \mathcal{N}(D,n) &\leq  \sum \limits_{j=1}^{n+1} \chi_{D} (\lambda_{j} (T_{n}(a))) \, |P(\lambda_{j}(T_{n}(a)))|
\\
 &\leq   \left(\sum \limits_{j=1}^{n+1} \chi_{D} (\lambda_{j} (T_{n}(a))) \right)^{1/2} \  \left( \sum \limits_{j=1}^{n} |P(\lambda_{j}(T_{n}(a)))|^{2} \right)^{1/2}
 \\
& =   \mathcal{N}(D,n)^{1/2} \left( \sum \limits_{j=1}^{n+1} |P(\lambda_{j}(T_{n}(a)))|^{2} \right)^{1/2}.
\end{align*}
Since $P(\lambda_{j}(T_{n}(a)))=\lambda_{j}(P(T_{n}(a)))$, we can square both sides to obtain 
$$(1-\epsilon)^{2} \ \mathcal{N}(D,n) \leq   \sum\limits_{j=1}^{n+1} |P(\lambda_{j}(T_{n}(a)))|^{2} \leq  \ ||P(T_{n}(a))||_{F}^{2}.
$$
As noted in the previous section, one can modify the proof of Theorem \ref{normal} to obtain
\begin{align}
||P(T_{n}(a))||_{F}^{2} & =  \text{Tr}[ P(T_n(a)) \, (P(T_n(a)))^* ] \nonumber\\
 & =   \frac{n+1}{2\pi} \int_{0}^{1} \int_{0}^{2\pi} |P(a(x,t))|^{2} dt\, dx +o(n). \label{clust estimate}
\end{align}
The assumption of normality is not needed here since we only consider polynomials. Similarly, condition \eqref{decay} yields that $T_n(a)$ does not have to be banded for \eqref{clust estimate}. Consequently, we deduce 
$$\limsup_{n\rightarrow\infty} \frac{ (1-\epsilon)^{2} \, \mathcal{N}(D,n)}{n+1} \leq \frac{1}{2\pi} \int_{0}^{1} \int_{0}^{2\pi} |P(a(x,t))|^{2} \,dt \, dx \leq  \epsilon^{2}
$$ 
The last inequality holds since $|P(\xi)|\leq \epsilon$ whenever $\xi \in \mathcal{ER}(a)$.  From the arbitrariness of $\epsilon$, the last inequality implies $\mathcal{N}(D,n)=o(n)$. 

Now consider an arbitrary open set $V\supset \mathcal{ER}(a)$.  By Gershgorin's Theorem, the spectrum of $T_n(a)$ lies in  the union of some disks, say $G$. Let $C:=G \cap (\mathbb{C}\backslash V)$. If $C$ is empty, then there is nothing to prove. If $C \neq \emptyset$, then every $z \in C$ lies within some open disk $D(z)$ centered at $z$ for which
$$ \mathcal{N}(D(z),n)=o(n).$$
Thus, $C$ can be covered by a family of open disks $\left\lbrace D(z) \right\rbrace_{z\in C}$, each of which contains at most $o(n)$ eigenvalues. $C$ being a compact set, it can be cover by a finite sub-covering of those disks; hence $C$ itself must contain at most $o(n)$ eigenvalues of $T_{n}(a)$. Since $C \cup V$ contains all the spectrum of $T_{n}(a)$, equation \eqref{Cluster} follows and the proof is complete.
\end{proof}

To illustrate the above theorem, and also its limitations, consider the matrices $B_n$ whose entries on the $+1$ and $-2$ diagonals are given by
\[ b_{j,j-1} = 1 - \frac{\mu_{j-2}}{\mu_n}, \quad b_{j,j+2} = \frac{j(j+1)}{\mu_n}
\]
where
\[ \mu_n = n(n-1 + 6 \rho)
\]
The eigenvalue problem for $B_n$ arises when one seeks polynomial solutions to the generalized Lam\'{e} equation.  See \cite{mcboag09} for an interpretation of the eigenvalues of $B_n$ in terms of 
charges in a logarithmic potential.  It is easy to see that the $B_n$ are asymptotic to the KMS matrix with symbol
\[ a(x,t) = \left(1-x^2\right) e^{it} + x^2 e^{-2it}
\]
Theorem~\ref{ClusterTheorem} thus implies that the eigenvalues will cluster in the extended range of the symbol, which in this case is just the unit disk.  (Note that in this case the range and the extended range are the same.)  This is true, but it doesn't give us much detailed information about the spectrum.  In fact, the eigenvalues of $B_n$ all lie on the star $\{ r\omega^j : \omega = e^{2\pi i /3}, r\geq 0,  j=0,2,2\}$ in the complex plane \cite{mc09}.  One thing Theorem~\ref{ClusterTheorem} does give us is a bound on the magnitude of the eigenvalues (at least most of them).  This is illustrated in figure~\ref{lame_ex}.

\begin{figure}[h]
\begin{center}
\includegraphics*[width=2in]{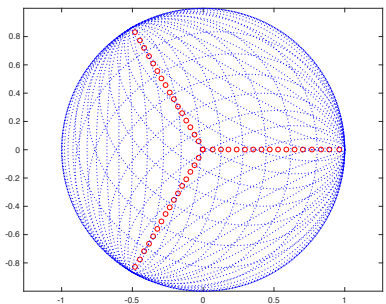}
\end{center}
\caption{Eigenvalues of $B_n$ (circles) for $n=50$ and $\rho = 1$, and curves $a(s,t)$ for $x=0,.05,\dots, 1$, $t\in [0,2\pi]$.  Compare with figure~\ref{toeplitz_ex}.}
\label{lame_ex}
\end{figure}

As another example, where the extended range is more informative, consider the symbol
\beq
 a(x,t) = \left(\frac{1}{2} + \sqrt{x - \frac{1}{2}}\right) e^{-2it} + e^{-it} + e^{it}
\label{clustersymbol}
\eeq
Eigenvalues and curves $a(x,t)$ for fixed values of $x$ are shown in figure~\ref{cluster_ex}.  We see that not all eigenvalues lie in the range of $a$, but they all lie in the extended range ${\cal ER}(a)$, which is roughly the interior of the envelope of the blue curves.

\begin{figure}[h]
\begin{center}
\includegraphics*[height=1.5in]{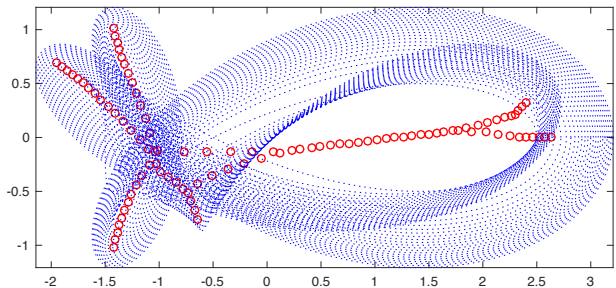}
\end{center}
\caption{Eigenvalues of $T_{100}(a)$ (circles) for $a$ as in \eqref{clustersymbol}, and curves $a(x,t)$ for $x=0,.025,\dots, 1$, $t\in [0,2\pi]$.}
\label{cluster_ex}
\end{figure}

%

%
%
\section{Strong Limit Theorems}
\label{sec.strong}

Now we turn to the Strong Limit Theorem, which can be viewed as a first order correction to the First Limit Theorem.
In order to prove a Strong Theorem for KMS matrices based on the moments method,  we need to obtain a more precise form of the error term in the proof of the First Limit Theorem. One way is to impose faster decay on the Fourier coefficients of the symbol $a$, or equivalently some smoothness on $a$. 

\subsection{Mejlbo and Schmidt's result and its generalizations}

We start by proving the following lemma whose proof follows essentially the same lines as the one found in \cite{mesc62}. We make some minor adjustments allowing us to relax the regularity conditions on the symbol.

\begin{lem}  \label{main lemma}
Let $a$ be a complex-valued symbol on $[0,1] \times [0,2\pi]$ such that  
$$a_x \in C^{1+\alpha}([0,2\pi]) \qquad \text{ and } \qquad a_t \in C^{1+\lceil 1/\alpha \rceil } ([0,1])$$
for some $0<\alpha \leq 1$. Moreover, suppose the following two conditions hold:
\begin{equation} \label{decay 2}
  \sum_{|k| \in \Z} |k|^{ 1+1/\alpha} \| \hat{a}_k \|_\infty < \infty \qquad \text{ and } \qquad   \sum_{|k| \in \Z}  \| \partial_x{\hat{a}}_k \|_{\varepsilon} < \infty.
\end{equation}
Then, for any $p \in \N$, we have 
\begin{align*}
 \text{Tr}[T_n^p( a) ] & =  \frac{n+1}{2\pi} \int_0^1 \int_0^{2\pi} a^p(x,t) \, dt \, dx  +  \frac{1}{4\pi} \int_0^{2\pi} a^p(0,t) \, dt - \frac{1}{4\pi} \int_0^{2\pi} a^p(1,t) \, dt \\
   & \quad + \sum_{|h|=0} N(h) \, \left[ \prod_{j=1}^p \hat{a}_{h_j}(0) + \prod_{j=1}^p \hat{a}_{h_j}(1) \right] + p \,  \vvvert a \vvvert^{p-1} \, o(1)
 \end{align*}
 where $N(h) =  \max\{0,h_1,h_1+h_2,....,h_1+\cdots + h_{p-1} \} $. 
 \end{lem}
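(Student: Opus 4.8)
My plan is to start from the trace identity obtained in the proof of Theorem~\ref{normal} with $q=0$,
\[
\text{Tr}[T_n^p(a)]\;=\;\sum_{i=0}^{n}\ \sideset{}{'}\sum_{|h|=0}\ \prod_{l=1}^{p}\hat a_{h_l}\!\left(\frac{2i+\nu_l}{2n+2}\right),\qquad \nu_l=2(h_1+\cdots+h_{l-1})+h_l,
\]
where $h=(h_1,\dots,h_p)$, the condition $|h|=0$ reflects the cyclic closure of the trace, and the prime restricts the summation so that all the factors $\hat a_{h_l}(\cdots)$ appearing are genuine entries of $T_n$, i.e.\ $0\le i+s_l\le n$ for every partial sum $s_l:=h_1+\cdots+h_l$; equivalently $-m(h)\le i\le n-N(h)$, with $N(h)=\max\{0,s_1,\dots,s_{p-1}\}$ as in the statement and $m(h):=\min\{0,s_1,\dots,s_{p-1}\}\le 0$. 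In Theorem~\ref{normal} this double sum was evaluated only modulo $o(n)$; the whole role of the extra smoothness ($a_x\in C^{1+\alpha}$, $a_t\in C^{1+\lceil 1/\alpha\rceil}$) and decay \eqref{decay 2} is to push the same computation to precision $o(1)$.

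\smallskip

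Fix $h$ and set $g_h:=\prod_{l=1}^{p}\hat a_{h_l}$. I would make two refinements of the Theorem~\ref{normal} estimate, each for the fixed $h$ and with error constants that will turn out summable over $h$. First, the shifts $\nu_l/(2n+2)=\Oo(1/n)$ sitting inside the factors $\hat a_{h_l}$: a first-order Taylor expansion of each factor about $i/(n+1)$ turns $\prod_l\hat a_{h_l}\!\big(\tfrac{2i+\nu_l}{2n+2}\big)$ into $g_h\!\big(\tfrac i{n+1}\big)$, plus a first-order term of size $\Oo(1/n)$ for each $i$ --- hence of size $\Oo(1)$, \emph{not} $o(1)$, once summed on $i$ --- plus a remainder that is $o(1)$ and summable over $h$, by the Hölder control of $\partial_x\hat a_k$ built into \eqref{decay 2}. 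Second, the constrained Riemann sum: by Euler--Maclaurin (which here needs only $g_h\in C^{1+\varepsilon}$, explaining why a Hölder norm rather than a second derivative occurs in \eqref{decay 2}),
\[
\sum_{i=-m(h)}^{\,n-N(h)} g_h\!\left(\frac{i}{n+1}\right)\;=\;(n+1)\!\int_0^1\! g_h(x)\,dx\;+\;\tfrac12\bigl(g_h(0)-g_h(1)\bigr)\;-\;|m(h)|\,g_h(0)\;-\;N(h)\,g_h(1)\;+\;o(1),
\]
the last two terms accounting for the $|m(h)|$, resp.\ $N(h)$, sample points dropped near $x=0$, resp.\ near $x=1$, each worth $g_h(0)$, resp.\ $g_h(1)$, to leading order.

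\smallskip

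Finally I would sum this per-$h$ expansion over all $h$ with $|h|=0$ and collect --- a rearrangement legitimate because \eqref{decay 2} makes every series that occurs absolutely convergent; e.g.\ $N(h)\le\sum_l|h_l|$ gives $\sum_{|h|=0}N(h)\prod_l\|\hat a_{h_l}\|_\infty\le p\big(\sum_k|k|^{1+1/\alpha}\|\hat a_k\|_\infty\big)\big(\sum_k\|\hat a_k\|_\infty\big)^{p-1}<\infty$, and likewise for the remaining sums (this is also where the error term takes its stated shape $p\,\vvvert a\vvvert^{p-1}o(1)$). Using the identity $\tfrac1{2\pi}\int_0^{2\pi}a^p(x,t)\,dt=\sum_{|h|=0}\prod_l\hat a_{h_l}(x)$ (and its analogue at $x=0,1$): the leading terms assemble into $\tfrac{n+1}{2\pi}\int_0^1\!\int_0^{2\pi}a^p$; the $\tfrac12(g_h(0)-g_h(1))$ terms into $\tfrac1{4\pi}\int_0^{2\pi}a^p(0,t)\,dt-\tfrac1{4\pi}\int_0^{2\pi}a^p(1,t)\,dt$; the first-order shift corrections cancel identically, because under the reversal $h=(h_1,\dots,h_p)\mapsto(h_p,\dots,h_1)$ the $\nu_l$ change sign while $\prod_l\hat a_{h_l}$ is unchanged; and the $|m(h)|$- and $N(h)$-defects combine --- the same reversal exchanging $|m(h)|$ with $N(h)$ and fixing $\prod_l\hat a_{h_l}$ --- into the last term $\sum_{|h|=0}N(h)\big[\prod_j\hat a_{h_j}(0)+\prod_j\hat a_{h_j}(1)\big]$ of the statement (the overall sign of this piece should be pinned down carefully, in comparison with \cite{mesc62}).

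\smallskip

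The hard part, I expect, is \emph{uniformity in $h$}: the two refinements above must be carried with error constants summable over $\{h:|h|=0\}$, so that passing from the finite double sum to the closed form genuinely costs $o(1)$ and not $o(n)$ --- and this is the only place the hypotheses beyond those of Theorem~\ref{normal} are used. Within that, the two delicate points are the Euler--Maclaurin/Riemann-sum estimate under merely Hölder-continuous first $x$-derivatives (for which the Hölder norm and the exponent $\lceil1/\alpha\rceil$ in \eqref{decay 2} are tailored), and the precise bookkeeping of which partial sums $s_l$ delimit the constraint region, and hence of $N(h)$ and $m(h)$; everything else is the elementary symbol algebra indicated above.
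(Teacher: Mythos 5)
Your plan follows essentially the same route as the paper's proof: a truncation of the symbol to a trigonometric polynomial of degree $(n+1)^\delta$ (with $0<\delta<\alpha/(1+\alpha)$) justified by the decay condition \eqref{decay 2}, a symmetrization step to kill the first-order shift corrections --- the paper implements this via the identity $\text{Tr}[T_n^p]=\text{Tr}[(T_n^{tr})^p]$, which produces $\tfrac12\bigl(f_{h,i}(1)+f_{h,i}(-1)\bigr)-f_{h,i}(0)$ for the \emph{same} $h$ and is precisely your $h$-reversal in a more compact packaging --- followed by Euler--Maclaurin for the unconstrained Riemann sum and a dominated-convergence count of the boundary defects $R_{i,1},R_{i,2}$. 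Your caution about the sign of the $\sum_{|h|=0}N(h)[\cdots]$ term is well placed: the paper's own decomposition (constrained sum $=$ unconstrained sum $-$ defects) and your reindexing $|m(h)|\mapsto N(h)$ both yield that term with a \emph{minus} sign, so the $+$ in the stated lemma appears to be a sign slip carried over from \cite{mesc62}.
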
  

\begin{proof}
 Pick $0<\delta < \alpha/(1+\alpha)$ and let  $b(x,t)$ be the truncated symbol defined by
 $$ b(x,t) = \sum_{|k| \leq (n+1)^\delta} \hat{a}_k(x) \ e^{ikt}.$$
In order to simplify several of the expressions below, we introduce some useful notations. For every multi-index $h=(h_1,...,h_p)$ such that $|h|=0$, we denote by  
$$H=\{ h_1, h_1+h_2,...,h_1+ \cdots + h_{p-1} \}$$ 
and by $m(H)=\min H$ and $M(H)=\max H$.  We also denote by $S_{i,\delta}$ the set 
$$ S_{i,\delta} = \{ h : \, \max_j |h_j| \leq  (n+1)^\delta, \ -i  \leq m(H) \leq M(H) \leq n-i\}.$$
Finally, we introduce the functions 
$$  f_{h,i} (t) = \prod_{k=1}^p  \hat{a}_{h_k} \left( \frac{i}{n+1} + t \, \frac{2h_1+\cdots + 2h_{k-1} +h_k}{2n+2} \right).$$
With $\vvvert a \vvvert $ be as in \eqref{decay}, the trace formula \eqref{trace 2} together with the bounds \eqref{decay 2} yield
\begin{align*}
 \left|  \text{Tr}[T^p_n(a)] - \text{Tr}[T^p_n(b) ]\right| & =  \left|  \sum_{i=0}^n \sum_{h \in S_{i,1}}  f_{h,i}(1) -  \sum_{i=0}^n \sum_{h \in S_{i,\delta}}  f_{h,i}(1) \right| \\
   & \leq  \sum_{i=0}^n \sum_{h \in S_{i,1} - S_{i\delta}} |f_{h,i}(1)| \\
  & \leq  p \vvvert a \vvvert^{p-1} (n+1)   \sum_{|k|>(n+1)^{\delta}}  \| \hat{a}_k\|_\infty\\
  & \leq  p \vvvert a \vvvert^{p-1} \sum_{|k|> (n+1)^{\delta}} |k|^{1+1/\alpha} \| \hat{a}_k \|_\infty.
\end{align*}  
Thus, condition  \eqref{decay 2} implies for $n$ large enough that 
\begin{equation*}
 \left|  \text{Tr}[T^p_n(a)] - \text{Tr}[T^p_n(b) \right|  = p \vvvert a \vvvert^{p-1} \, o(1).
\end{equation*}
If we denote by $T^{tr}_n(b)$ the transpose matrix -- not the conjugate-transpose -- of $T_n(b)$, then another application of \eqref{trace 2} gives us
\begin{align*}
     \text{Tr}[(T^{tr}_n (b))^p ] & =   \prod_{k=1}^p  \hat{a}_{h_k} \left( \frac{i}{n+1} -  \, \frac{2h_1+\cdots + 2h_{k-1} +h_k}{2n+2} \right) \\
       & =   \sum_{i=0}^n \sum_{h \in S_{i,\delta} } f_{h,i}(-1).
\end{align*}
Consequently, we obtain
\begin{align*}
 \text{Tr}[T_{n}^p(b)] - \sum_{i=1}^n \sum_{h \in S_{i,\delta}}  f_{h,i}(0) & =  \frac{1}{2} \, \left( \text{Tr}[T_n^p(a_{(n+1)^\delta})] + \text{Tr}[((T_n^{tr})^p)(a_{(n+1)^\delta})] \right)\\
   & \quad -  \sum_{i=0}^n \sum_{|h|=0}  f_{h,i}(0) \\
  & =   \frac{1}{2} \sum_{i=0}^n \sum_{ h \in S_{i,\delta}} \left(  f_{h,i}(-1) + f_{h,i}(1) - 2 f_{h,i}(0) \right)
  \end{align*}    
By applying the Mean Value Theorem to $f_{h,i}(-1)-f_{h,i}(0)$ and $f_{h_i}(1)-f_{h,i}(0)$ and using the H\"older continuity of $\partial_x a$, we can bound the previous expression by
\begin{align*}
   \left|  \text{Tr}[T_{n}^p(b)] -  \sum_{i=0}^n \sum_{h \in S_{i,\delta}}  f_{h,i}(0)  \right|  & \leq  \frac{(n+1) \, p \, \vvvert a \vvvert^{p-1}}{2} \, \left( \frac{ (n+1)^\delta}{n+1} \right)^{1+1/\alpha} 
   \\
   &= p \,  \vvvert a \vvvert^{p-1} o(1)
 \end{align*}
from our choice of $\delta$. By a similar argument as above, we also obtain
\begin{equation*}
  \left|  \sum_{i=0}^n \sum_{h \in S_{i,1}}  f_{h,i}(0) -  \sum_{i=0}^n \sum_{h \in S_{i,\delta}}  f_{h,i}(0)  \right| = p \vvvert a \vvvert^{p-1} \, o(1).
\end{equation*}   
Therefore, it suffices to prove the result for the sum $ \sum_{i=0}^n \sum_{h \in S_{i,\delta}}  f_{h,i}(0)$. We have
 \begin{equation} \label{MS1}
  \sum_{i=0}^n \sum_{h \in S_{i,1}}  f_{h,i}(0) =  \sum_{i=0}^n \sum_{|h|=0}  f_{h,i}(0) - \sum_{i=0}^n \sum_{h \in R_{i,1}} f_{h,i}(0) -  \sum_{i=0}^n \sum_{h \in R_{i,2}} f_{h,i}(0) 
\end{equation}    
where $R_{i,1}=\{ h: |h|=0, \ m(H)<-i, \ M(H) \leq n-i\} $ and $R_{i,2} =\{ h : \  |h|=0, \ M(H)>n-i\}$. 
Arguing as in the proof of the First Theorem and applying the Euler-Maclaurin formula, we see 
\begin{align} \label{MS2}
    \sum_{i=0}^n \sum_{h \in S_{i,1}}  f_{h,i}(0)  & =  \sum_{i=0}^n \int_0^{2\pi} a^p(i/n,t) \, dt \nonumber\\
     & =  (n+1) \int_0^1 \int_0^{2\pi}  a^p(x,t) \, dt \, dx + \frac{1}{4\pi} \int_0^{2\pi} a^p(0,t) \, dt \nonumber \\
     & \quad - \frac{1}{4\pi} \int_0^{2\pi} a^{p}(1,t) \, dt +  p \vvvert a \vvvert^{p-1}  o(1). 
\end{align} 
On the other hand, 
\[
  \sum_{i=0}^{n} \sum_{h \in R_{i,1}} |f_{h,i}(0)|   \leq  \sum_{|h|=0} \left[ \sum_{k=1}^p |h_k| \right] \prod_{k=1}^p \|\hat{a}_k\|_\infty  \leq p \left[ \sum_{k \in \Z} |k| \|\hat{a}_k\|_\infty \right]  \vvvert a \vvvert^{p-1},
\]
so we can apply the Dominated Convergence Theorem to conclude 
\begin{align} 
\label{MS3}
   \sum_{i=0}^n \sum_{h \in R_{i,1}}  f_{h,i}(0) &  =  \sum_{i=0}^n \sum_{h \in R_{i,1}} \prod_{k=1}^p \hat{a}_{h_k}(0) + p \, \vvvert a \vvvert^{p-1} o(1) \nonumber\\
    &  =  \sum_{|h|=0} N(h) \prod_{k=1}^p \hat{a}_{h_k}(0) + p \, \vvvert a \vvvert^{p-1} o(1).
\end{align}   
In the same way, we prove 
\begin{align} 
\label{MS4}
  \sum_{i=0}^n \sum_{h \in R_{i,2}}  f_{h,i}(0) & =  \sum_{i=0}^n \sum_{ |h|=0 \atop M(H)>i} \prod_{k=1}^p \hat{a}_{h_k}\left( \frac{n-i}{n+1} \right) \nonumber \\
  & =   \sum_{|h|=0} N(h) \prod_{k=1}^p \hat{a}_{h_k}(1) + p \, \vvvert a \vvvert^{p-1} o(1).
\end{align}   
The result follows by combining \eqref{MS1}, \eqref{MS2}, \eqref{MS3} and \eqref{MS4}.
\end{proof}

 The Strong Theorem for KMS matrices below is a modest improvement of the one's obtained by Mejlbo and Schmidt. We use the same notation as in the introduction.

\begin{theo}  
Let $a$ be as in Lemma \ref{main lemma}. If  $\text{Re} \, (a) > 0$, then 
\beq \label{Strong KMS eq}
 \lim_{n \to \infty}  \frac{\det T_n(a)}{(G(a))^n}   = \exp  \,\frac{1}{2} \{ e(a;0)-e(a;1) +E(a;0)+E(a;1) \}.
 \eeq
\end{theo}

\begin{proof}
Let $b=a-1$ and let $\vvvert a \vvvert $ and $\vvvert b \vvvert$ be the bounds \eqref{decay} associated to $a$ and $b$. The left hand side of \eqref{Strong KMS eq} is invariant under scaling of the symbol, therefore we may assume without loss of generality that $\vvvert a \vvvert<1$ and $\|a\|_\infty < 1$ with similar bounds for $b$.  In particular, the eigenvalues of $T_n(b)$ satisfy $|\lambda_k(T_n(b))|<1$. Thus, 
\begin{align*}
    \log (\det T_n(a) ) & =  \sum_{k=0}^n \log \lambda_k(T_n(a))\\
      & =  \sum_{k=0}^n \sum_{p=1}^\infty \frac{1}{p} \ (\lambda_k(T_n(a))-1)^p\\
      & =  \sum_{k=0}^n \sum_{p=1}^\infty \frac{1}{p} \ (\lambda_k(T_n(b)))^p.
\end{align*}    
Since $\vvvert b \vvvert<1$, the double sum is absolutely convergent, and hence we can switch the order of summation.  By previous  lemma applied to the symbol $b$, it then follows
\begin{align*}
   \log (\det T_n(a) ) 
    & =   \sum_{p=1}^\infty \frac{1}{p} \left\{ \frac{n+1}{2\pi} \int_0^1 \int_0^{2\pi} b^p(x,t) \, dt \, dx \right.\\
    & \quad + \frac{1}{4\pi} \int_0^{2\pi} b^p(0,t) \, dt - \frac{1}{4\pi} \int_0^{2\pi} b^p(1,t) \, dt \\
    & \quad + \left. \sum_{|h|=0} N(h) \, \left[ \prod_{j=1}^p \hat{b}_{h_j}(0) + \prod_{j=1}^p \hat{b}_{h_j}(1) \right] \right\}+o(1).
\end{align*}
That is, 
\begin{align*}
   \log (\det T_n(a) ) 
     & =    \frac{n+1}{2\pi} \int_0^1 \int_0^{2\pi}  \log a(x,t) \, dt \, dx \\
    & \quad +\frac{1}{4\pi} \int_0^{2\pi} \log a(0,t) \, dt - \frac{1}{4\pi} \int_0^{2\pi} \log a(1,t) \, dt \\
    & \quad + \sum_{p=1}^\infty \frac{1}{p}    \sum_{|h|=0} N(h) \, \left[ \prod_{j=1}^p \hat{b}_{h_j}(0) + \prod_{j=1}^p \hat{b}_{h_j}(1) \right]  +o(1).
\end{align*}   
Note that for $x=0$ or $x=1$, the matrices $T_n(b)$ are Toeplitz. By the Strong Szeg\H{o}'s Theorem for Toeplitz matrices, we deduce 
\begin{eqnarray*}
    \lefteqn{ \sum_{p=1}^\infty \frac{1}{p}    \sum_{|h|=0} N(h) \, \left[ \prod_{j=1}^p \hat{b}_{h_j}(0) + \prod_{j=1}^p \hat{b}_{h_j}(1) \right] }\\
     & & = \frac{1}{2} \sum_{k \in Z}  \left[ k\,  \widehat{(\log a)}_k(0) \, \widehat{(\log a)}_{-k}(0)+ \sum_{k \in \Z} k \, \widehat{(\log a)}_k(1) \, \widehat{(\log a)}_{-k}(1) \right] +o(1).
 \end{eqnarray*}     
The desired conclusion is then an immediate consequence of last two estimates.
\end{proof}

It is well-know that the Fourier coefficients decay faster when more regularity of the symbol is assumed. To this extent, recall the estimate
\begin{equation*} \label{fourier coeff}
   \| \hat{a}_k\|_\infty \leq \frac{\| a\|_{r+\alpha}}{(1+|k|)^{r+\alpha}}
\end{equation*}   
where $\| \cdot \|_{r+\alpha}$ denotes the H\"older norm with $r \in \N$ and $0 < \alpha \leq 1$.  For instance, both conditions in \eqref{decay 2} hold if we assume that $a_x \in C^{2+\alpha}([0,2\pi])$ and $a_t \in C^{2+\lceil 1/\alpha \rceil}([0,1])$.  

\begin{cor}
Let $a$ be a complex-valued symbol on $[0,1] \times [0,2\pi]$ with $\text{Re}\, (a) > 0$. Suppose $a_t \in C^{2+\alpha}([0,1])$ and $a_x \in C^{2+\lceil 1/\alpha \rceil } ([0,2\pi])$ 
for $0<\alpha \leq 1$. Then, $\{T_n(a)\}$ satisfies the conclusion of the Strong Limit Theorem above.
\end{cor}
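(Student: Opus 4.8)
The plan is to read the Corollary off the Strong Limit Theorem \eqref{Strong KMS eq} just proved: it suffices to check that the regularity assumed here forces all the hypotheses of Lemma~\ref{main lemma} for a suitable Hölder exponent. The positivity condition $\text{Re}(a)>0$ is common to both statements and carries over verbatim; since it confines the range of $a$ to the open right half-plane, the principal branch of $\log$ is analytic on a neighbourhood of it, so $\log a$ inherits the smoothness of $a$ — which is exactly what the proof of the Strong Theorem needs when it replaces $a$ by $b=a-1$ and expands the logarithm. The smoothness part of Lemma~\ref{main lemma}'s hypotheses, $a_x\in C^{1+\alpha}([0,2\pi])$ and $a_t\in C^{1+\lceil1/\alpha\rceil}([0,1])$, is strictly weaker than $a_x\in C^{2+\lceil1/\alpha\rceil}$ and $a_t\in C^{2+\alpha}$ and so holds immediately, with a bit of room from the nesting of Hölder classes.

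It then remains to verify the two summability conditions in \eqref{decay 2}. For the first, $\sum_k|k|^{1+1/\alpha}\|\hat a_k\|_\infty<\infty$, I would apply the Fourier-coefficient estimate $\|\hat a_k\|_\infty\le\|a\|_{r+\alpha}/(1+|k|)^{r+\alpha}$ recalled just before the statement, to $t\mapsto a(x,t)$ uniformly over $x\in[0,1]$: the hypothesis $a_x\in C^{2+\lceil1/\alpha\rceil}([0,2\pi])$ makes the coefficients decay like $(1+|k|)^{-(2+\lceil1/\alpha\rceil)}$, which outpaces the weight $|k|^{1+1/\alpha}$. For the second, $\sum_k\|\partial_x\hat a_k\|_\varepsilon<\infty$, the key point is that differentiation in $x$ commutes with the $t$-Fourier transform, $\partial_x\hat a_k(x)=\frac{1}{2\pi}\int_0^{2\pi}\partial_x a(x,t)\,e^{-ikt}\,dt$, so $\partial_x\hat a_k$ is the $k$-th $t$-Fourier coefficient of $\partial_x a$; the $t$-smoothness of $a$ gives the decay in $k$, and the hypothesis $a_t\in C^{2+\alpha}([0,1])$ supplies the $\|\cdot\|_\varepsilon$-control in the $x$-variable once that decay has been extracted. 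With both conditions of \eqref{decay 2} established, Lemma~\ref{main lemma} applies, hence so does the Strong Theorem, and \eqref{Strong KMS eq} follows.

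I expect the only point requiring real care to be the exponent arithmetic in the first sum of \eqref{decay 2}: one must confirm that the decay rate $2+\lceil1/\alpha\rceil$ truly dominates the growth rate $1+1/\alpha$. Since $\lceil1/\alpha\rceil\ge1/\alpha$, this is clear when $1/\alpha\notin\Z$, and at the sparse integer reciprocals one sharpens the estimate in the usual way. A secondary thing to watch is that $\partial_x\hat a_k$ couples the $x$- and $t$-regularities of $a$, so in the second sum one should extract the $t$-decay in $k$ before invoking the $x$-regularity to bound the $\|\cdot\|_\varepsilon$-norm; everything else — the commutation identity, the Fourier estimate applied uniformly in $x$, and the nesting of Hölder classes — is routine bookkeeping.
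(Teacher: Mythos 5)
Your overall plan---verifying Lemma~\ref{main lemma}'s hypotheses from the Corollary's assumptions via the Fourier-coefficient decay estimate---is the same as the paper's one-sentence argument, and you are right to flag the borderline exponent arithmetic when $1/\alpha$ is an integer, which the paper glosses over. However, the step you call ``immediate'' contains a real error. You assert that the smoothness part of Lemma~\ref{main lemma}'s hypotheses is strictly weaker than what the Corollary assumes, but the implication $a_t\in C^{2+\alpha}([0,1])\Rightarrow a_t\in C^{1+\lceil 1/\alpha\rceil}([0,1])$ fails for every $\alpha<1$: in that range $\lceil 1/\alpha\rceil\ge 2$, so $1+\lceil 1/\alpha\rceil\ge 3>2+\alpha$, and the H\"older classes nest the \emph{opposite} way from what you claim. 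Only at $\alpha=1$ does the required inclusion hold.

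This gap traces back to a visible inconsistency in the source: the sentence immediately preceding the Corollary says ``both conditions in \eqref{decay 2} hold if we assume $a_x\in C^{2+\alpha}([0,2\pi])$ and $a_t\in C^{2+\lceil 1/\alpha\rceil}([0,1])$,'' i.e.\ the Corollary with the roles of $a_x$ and $a_t$ swapped. That version does make the smoothness inclusions trivial, exactly as you intended, but then the decay computation in your third paragraph collapses: $a_x\in C^{2+\alpha}$ gives only $\|\hat a_k\|_\infty\lesssim|k|^{-(2+\alpha)}$, whence $\sum_k|k|^{1+1/\alpha}\|\hat a_k\|_\infty\lesssim\sum_k|k|^{1/\alpha-\alpha-1}$ diverges for every $\alpha\le 1$. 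Conversely, the version you adopted (the Corollary's) yields the decay (generically, and with a Cauchy--Schwarz sharpening at integer $1/\alpha$) but not the smoothness. Read literally, neither set of hypotheses delivers all four conditions of Lemma~\ref{main lemma}: one needs the stronger $t$-regularity $a_x\in C^{2+\lceil 1/\alpha\rceil}([0,2\pi])$ for the decay \emph{and} $x$-regularity at least $a_t\in C^{1+\lceil 1/\alpha\rceil}([0,1])$ for the smoothness. Your write-up should strengthen the hypotheses accordingly (or flag the inconsistency explicitly) rather than claim the required inclusions hold ``with a bit of room.'' A secondary caveat: decay of $\partial_x\hat a_k$ in $k$ requires $t$-smoothness of $\partial_x a$, which is a joint-regularity statement, not a consequence of $t$-smoothness of $a$ alone together with $x$-smoothness; the paper leaves this tacit and so do you, but it deserves an explicit hypothesis.
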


Similarly, if $a$ is merely a trigonometric polynomial, or equivalently the matrices $T_n(a)$ have fixed band size, then $a_x$ is smooth on $[0,2\pi]$. Hence, we obtain the following consequence.

\begin{cor}
The Strong Theorem holds if $a$ is a trigonometric polynomial on $[0,1] \times [0,2\pi]$ with $\text{Re}(a)> 0$, and $a_t \in C^{1+\alpha}([0,1])$  for any $\alpha >0$. 
\label{STtrig}
\end{cor}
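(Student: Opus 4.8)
The plan is to deduce Corollary~\ref{STtrig} directly from the Strong Theorem (\eqref{Strong KMS eq}) by verifying its hypotheses, i.e.\ that a trigonometric polynomial symbol $a$ with $a_t \in C^{1+\alpha}([0,1])$ satisfies all the conditions of Lemma~\ref{main lemma}. The key observation is that when $a(x,t) = \sum_{|k|\le r} \hat a_k(x) e^{ikt}$ is a trigonometric polynomial of fixed degree $r$, the section $a_x(t)$ is a (finite) trigonometric sum in $t$ for each fixed $x$, hence $a_x \in C^\infty([0,2\pi])$; in particular $a_x \in C^{1+\alpha}([0,2\pi])$ for every $\alpha \in (0,1]$. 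So we are free to choose $\alpha$ as we like, and we simply take $\alpha$ to be the exponent appearing in the hypothesis $a_t \in C^{1+\alpha}([0,1])$ (if that $\alpha$ exceeds $1$ we replace it by $1$, which only weakens the requirement $a_t \in C^{1+\lceil 1/\alpha\rceil}([0,1])$ since $\lceil 1/\alpha \rceil$ is then smallest).

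Next I would check the two decay conditions in \eqref{decay 2}. Both are automatic because the sums are finite: since $\hat a_k \equiv 0$ for $|k| > r$, the series $\sum_{|k|}|k|^{1+1/\alpha}\|\hat a_k\|_\infty$ is a finite sum and hence converges, and likewise $\sum_{|k|}\|\partial_x \hat a_k\|_\varepsilon < \infty$ provided each $\hat a_k$ is $C^1$ in $x$ — which follows from $a_t \in C^{1+\alpha}([0,1]) \subset C^1([0,1])$ (each $\hat a_k(x) = \frac{1}{2\pi}\int_0^{2\pi} a(x,t) e^{-ikt}\,dt$ inherits the $x$-regularity of $a$). One also needs $a_t \in C^{1+\lceil 1/\alpha\rceil}([0,1])$ for Lemma~\ref{main lemma}; this is the one place the hypothesis must actually be read carefully, but for a trigonometric polynomial the map $x \mapsto \hat a_k(x)$ has exactly the smoothness of $a_t$ in the $x$-variable, and "$a_t \in C^{1+\alpha}$ for any $\alpha>0$" should be interpreted (as in the Corollary's phrasing, parallel to the preceding Corollary) as carrying whatever finite smoothness is needed; the honest statement is that if $a_t$ is smooth enough — $C^{2}$ suffices after picking $\alpha=1$ so that $\lceil 1/\alpha\rceil = 1$ — the hypotheses of Lemma~\ref{main lemma} are met. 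With $\alpha = 1$ the requirement on $a_x$ is $C^{2}([0,2\pi])$, again trivially satisfied.

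With the hypotheses of Lemma~\ref{main lemma} in hand, and $\mathrm{Re}(a) > 0$ given, the Strong Theorem applies verbatim and yields
\[
\lim_{n\to\infty} \frac{\det T_n(a)}{(G(a))^n} = \exp\Big(\tfrac{1}{2}\{e(a;0) - e(a;1) + E(a;0) + E(a;1)\}\Big),
\]
which is exactly the conclusion of the Strong Limit Theorem for the symbol $a$. The main (indeed only) obstacle is bookkeeping: matching the regularity index $\alpha$ between the two conditions $a_x \in C^{1+\alpha}$ and $a_t \in C^{1+\lceil 1/\alpha\rceil}$ so that a single $\alpha$ works, and confirming that "$a$ trigonometric polynomial" genuinely delivers the required $t$-smoothness for free while "$a_t \in C^{1+\alpha}([0,1])$ for any $\alpha>0$" delivers the required $x$-smoothness. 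Once $\alpha$ is fixed (taking $\alpha=1$ is cleanest), there is no analysis left to do — everything reduces to the already-proved theorem.
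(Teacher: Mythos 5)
Your proposal is essentially the same as the paper's: the entire ``proof'' in the paper is the single sentence preceding the corollary, which says that for a trigonometric polynomial the section $a_x$ is smooth on $[0,2\pi]$, and then concludes with ``Hence, we obtain the following consequence.'' Your verification of the decay conditions in \eqref{decay 2} (finite sums, so trivially finite) and of the $a_x$ regularity (automatically $C^\infty$) is exactly what the paper intends.

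Where your proposal is \emph{more} careful than the paper is precisely the point you flag: with $\alpha=1$ in Lemma~\ref{main lemma}, the hypothesis read off verbatim is $a_t \in C^{1+\lceil 1/\alpha\rceil}([0,1]) = C^2([0,1])$, which is strictly stronger than the corollary's stated $a_t \in C^{1+\alpha}([0,1])$ for some $\alpha>0$. You are right that a naive ``apply the Lemma as a black box'' argument does not close this gap, and the paper does not address it either. To actually justify the weaker hypothesis one has to reopen the proof of Lemma~\ref{main lemma} for the fixed-bandwidth case: there the truncation to $|k|\le (n+1)^\delta$ is eventually exact, so the $\delta$ parameter can be dropped, and in the mean-value/second-difference step the relevant increments $\nu_j$ are bounded by a constant $2pr$ rather than $O((n+1)^\delta)$, so the H\"older bound for $\partial_x\hat a_k$ with \emph{any} exponent $\alpha>0$ already gives the $o(1)$ error $(n+1)\cdot (r/(n+1))^{1+\alpha} = O(n^{-\alpha})$, and the first-order Euler--Maclaurin remainder is likewise $o(1)$ under $C^{1+\alpha}$. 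Your proposal stops short of making this observation and instead falls back to ``$C^2$ suffices after picking $\alpha=1$,'' which proves a correct but slightly weaker statement than the corollary actually claims. Since the paper itself gives no more than a one-line argument, your flagging of the mismatch is a genuine improvement in precision, not a defect.
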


\subsection{Widom's result for KMS matrices}

We include here  an interesting extension of the Strong Limit Theorem  due to Widom \cite{wi76} in the Toeplitz case.  The Strong Limit Theorem \eqref{Strong KMS eq} is written using $\varphi = \log$.  However, this can be generalized.
 If $\{T_n(a) \}$ is a sequence of Toeplitz matrices whose symbol is absolutely continuous on $[0,2\pi]$, then Widom obtained a Strong Theorem for arbitrary analytic functions other than the logarithm. Namely,
\begin{eqnarray*}
   \lefteqn{ \lim_{n \to \infty}  \left[ \text{Tr}[ \varphi(T_n(a)) ] - \frac{n+1}{2\pi}  \int_0^{2\pi} \varphi(a(t)) \,  dt \right]  }\\
   & & = \frac{1}{8\pi^2} \sum_{k=1}^\infty  \int_0^{2\pi} \int_0^{2\pi} \sin(k(t-\tau)) \,  \frac{\varphi(a(t)) - \varphi(a(\tau))}{a(t)-a(\tau)} \, (  a'(t) -  a'(\tau)) \, dt \, d\tau.
\end{eqnarray*}
In the theorem below, we extend Widom's result to sequence of KMS matrices.

\begin{theo} Let $a$ be as in Lemma \ref{main lemma}. Then, for any $\varphi \in C^\omega(D_{M})$
\begin{align*}
    \lim_{n \to \infty} & \left[ \text{Tr}[ \varphi(T_n(a)) ] - \frac{n+1}{2\pi} \int_0^1 \int_0^{2\pi} \varphi(a(x,t)) \, dt \, dx \right]  \\
     &  =   \frac{1}{4\pi} \int_0^{2\pi} \varphi(a(0,t)) - \varphi(a(1,t)) \, dt \\
      & \quad + \frac{1}{4\pi^2} \sum_{k=1}^\infty  \int_0^{2\pi} \int_0^{2\pi} \, \left( \Phi(0,t,\tau)+ \Phi(1,t,\tau) \right) \, \sin (k(t-\tau))  \,  dt \, d\tau 
\end{align*}       
where $$\Phi(x,t,\tau) = \frac{\varphi(a(x,t)) - \varphi(a(x,\tau))}{a(x,t)-a(x,\tau)} \, ( \partial_t a(x,t) - \partial_t a(x,\tau)).$$
\end{theo}

\begin{proof} Write $\varphi(z) = \sum_{r=0}^\infty c_r z^r$ for $|z| <  \vvvert a \vvvert$. From Lemma \ref{main lemma}, we deduce that 
\begin{align*}
  \text{Tr} [\varphi (T_n(a) )] & -  \frac{n+1}{2\pi} \int_0^1 \int_0^{2\pi} \varphi(a(x,t)) \, dt \, dx \\
   &  = \frac{1}{4\pi} \int_0^{2\pi} \varphi (a(0,t)) \, dt - \frac{1}{4\pi} \int_0^{2\pi} \varphi( a(1,t)) \, dt \\
   &  \quad + \sum_{r=0}^\infty c_r    \sum_{|h|=0} N(h) \, \left[ \prod_{j=1}^r \hat{a}_{h_jd}(0) + \prod_{j=1}^r \hat{a}_{h_j}(1) \right]  +o(1).
\end{align*}   
Using the fact that $T_n(a)$ is Toeplitz for fixed $s$,  we obtain from Widom's reult
\begin{align*}
  \ \sum_{r=0}^\infty c_r  & \sum_{|h|=0} N(h) \, \left[ \prod_{j=1}^r \hat{a}_{h_jd}(0) + \prod_{j=1}^r \hat{a}_{h_j}(1) \right] \\
     &  =    \frac{1}{8\pi^2} \sum_{k=1}^\infty  \int_0^{2\pi} \int_0^{2\pi} \sin (k(t-\tau)) \, (\Phi(0,t,\tau)+\Phi(1,t,\tau)) \, dt \, d\tau  + o(1)
\end{align*}
from which the desired result follows.
\end{proof}

\subsection{Ehrhardt and Shao's result}

In a series of papers Shao \cite{sh98,sh03} and Ehrhardt  \cite{ehsh01} generalized the elegant operator method of Widom \cite{wi74, wi76} to
extend the results of Kac, Murdock and Szeg\H{o}, and Mejlbo and Schmidt to symbols with less regularity.  First, Shao \cite{sh98} proved a first limit theorem for symbols of bounded variation.  Later Ehrhardt and Shao \cite{ehsh01} proved a strong theorem for symbols with less regularity than Mejlbo and Schmidt required.  Shao \cite{sh03} then generalized some of these results to block matrices.

\smallskip

Ehrhardt and Shao define their matrices differently than Kac, Murdock and Szeg\H{o}.  In particular, given a function $a$ with Fourier series \eqref{FS1}, Ehrhardt and Shao define the  matrices
\newcommand{\opn}{\mbox{op}_n}
\beq
\opn a = \left[ \hat{a}_{j-i}\left(\frac{i}{n}\right)\right]_{i,j=0}^n
\label{opdef}
\eeq
These are not KMS matrices due to the indexing by $i/n$, as opposed to $(i+j)/(2n+2)$.  A peculiarity of this definition is that it is difficult to find a condition on the symbol $a$ so that $\opn a$ will be Hermitian.  Hence, it is a somewhat unnatural definition.

By Theorem~\ref{altindex}, the LSD of such matrices is the same as for KMS matrices (a result derived in \cite{sh98}).  However, Ehrhardt and Shao found the following for the determinant.  Suppose $a$ has winding number zero, $a$ is $C^{1+\alpha}$ in $x$ and $C^{\alpha}$ in $t$.  Then
\beq
\lim_{n\rightarrow\infty} \frac{\det (\opn a)}{G(a)^{n+1}} = \exp\frac{1}{2}\left\{e(a;0)+e(a;1) + E(a;0) + E(a;1) + {\cal F}(a)\right\}
\label{ES1}
\eeq
where $G, e$ and $E$ are as in (\ref{GeE1}-\ref{GeE3}), and
\[ {\cal F}(a) =  \int_0^1 \left(\sum_{k=-\infty}^{\infty} k \widehat{(\log a)}_k(x) \widehat{(\partial_x \log a)}_{-k}(x)\right) dx
\]
\begin{rem}
The paper of Ehrhardt and Shao \cite{ehsh01} contains a typo in the formula for $\lim_{n\rightarrow\infty} \det (\opn a)/G(a)^{n+1}$.  Their formula  (eqn (1.5) in \cite{ehsh01}) contains $ G(a)^{-1}$.  This would imply that the exponent of $G(a)$ in the denominator of \eqref{ES1} is $n$, instead of $n+1$, which is incorrect.
\end{rem}

The formula \eqref{ES1} of Ehrhardt and Shao appears to contradict the result \eqref{Strong KMS eq} of Mejlbo and Schmidt.  This difference was noted (but not explained) by Ehrhardt and Shao.  We will attempt to explain the difference here.  First there is the sign difference between $e(a;0)$ and $e(a;1)$.  However, this is only due to the fact that the KMS matrices index from $0$ to $n/(n+1)$ along the diagonals, whereas Ehrhardt and Shao index from $0$ to $1$.  This sensitivity was pointed out by Kac \cite{ka69}.  Changing the indexing in \eqref{opdef} from $i/n$ to $i/(n+1)$ would have the effect of changing the sign in \eqref{ES1} to $e(a;0)-e(a;1)$.  This difference arises from the Euler-Maclaurin approximation of the integral.

\smallskip

The more serious difference, though, is the presence of the ${\cal F}(a)$ term in \eqref{ES1}, which does not appear in \eqref{Strong KMS eq}.  Indeed, whereas Mejlbo and Schmidt's formula depends  on the symbol $a$ only at $x=0$ and $x=1$, the formula of Ehrhardt and Shao depends on $a$ for \emph{all} $x\in[0,1]$.  And, as Ehrhardt and Shao point out, $\exp\{{\cal F}(a)\}$ can take on any nonzero constant $c$ by choosing $a$ so that
\[ \log a(x,t) = 2e^{it}\log c + e^{-it} x
\]
Thus, the two formulas are incompatible.  

\smallskip

The formula \eqref{ES1} does not contradict \eqref{Strong KMS eq}  because $\opn a$ and $T_n(a)$ are different matrices.  While the LSD of the eigenvalues is the same for $\opn a$ and $T_n(a)$, the determinants behave differently in the limit.  Determinants are much more sensitive to small changes than are eigenvalue distributions.  The ${\cal F}(a)$ term in \eqref{ES1} is an artifact of the way Ehrhardt and Shao define their matrices.

The formula \eqref{ES1} comes from a generalization of the operator results of Widom.  Recall that
the standard Toeplitz and Hankel operators acting on $l^2$ are defined for symbols of one variable by
\[ T(a) = \left[\hat{a}_{j-i}\right]_{i,j=0}^{\infty} \quad \mbox{ and } \quad
H(a) = \left[\hat{a}_{i+j+1}\right]_{i,j=0}^{\infty}
\]
The product of Toeplitz operators is Toeplitz+Hankel, i.e.
\[ T(ab) - T(a)T(b) = H(a) H(\tilde{b})
\]
where $\tilde{b}(z) = b(z^{-1})$.
Widom \cite{wi74, wi76} derived a finite dimensional analogue:
\beq
T_n(ab) - T_n(a)T_n(b) = P_n H(a) H(\tilde{b}) P_n + Q_n H(\tilde{a}) H(b) Q_n
\label{w1}
\eeq
where $P_n$ and $Q_n$ are the projection and flip operators:
\begin{align*}
 P_n (f_0, f_1, \dots) &= (f_0, \dots, f_n, 0, \dots) 
 \\
Q_n (f_0, f_1, \dots) &= (f_n, f_{n-1}, \dots, f_0, 0, \dots)
\end{align*}
Widom then employed this formula in a beautiful argument using operator theory to
prove the Strong Szeg\H{o} Theorem \eqref{SS2}, showing that the error term can be written as
\[
 \exp\{E(a)\} = \det T(a)T(a^{-1})
\]

Ehrhardt and Shao generalize this result to matrices of the form \eqref{opdef}.  First, they generalize \eqref{w1} to 
\beq
\opn a(x)b(y) - (\opn a)(\opn \overline{b})^* = H_n(a) H_n(\tilde{b})^{tr} + J_n(a)J_n(\tilde{b})^{tr}
\label{ES3}
\eeq
where the matrix $\opn a(x)b(y)$ is the matrix whose $(p,q)-$entry is given by
\[ \frac{1}{2\pi} \int_0^{2\pi} a\left(\frac{p}{n},t\right) b\left(\frac{q}{n},t \right) e^{-i(p-q)t} dt
= \sum_{k=-\infty}^{\infty}\hat{a}_{p-k}\left(\frac{p}{n}\right) \hat{b}_{k-q}\left(\frac{q}{n}\right)
\]
and $J_n(a)$ and $H_n(a)$ are the half-infinite matrices given by
\[ J_n(a) = \left[ \hat{a}_{-1-n+j-i}\left(\frac{i}{n}\right) \right],
\
H_n(a) = \left[\hat{a}_{1+i+j}\left(\frac{i}{n}\right)\right],
 \ \left( \begin{array}{l}  0\leq i \leq n, \\[.05in] j=0,1,2,\dots\end{array}\right)
\]
Once the formula \eqref{ES3} is established they are able, with some difficulty, to carry through the operator argument to establish the generalized Strong Szeg\H{o} Theorem \eqref{ES1}.

\smallskip

It appears that this argument does not work for KMS matrices.  When one indexes by $(i+j)/(2n+2)$ along the diagonals, the formula \eqref{ES3} breaks down, and there does not seem to be a consistent way to define something analogous to $\opn a(x)b(y)$ to make it work.  Ehrhardt and Shao's way of defining their matrices works since one only indexes by the row.  This makes the definitions used in \eqref{ES3} consistent, and allows them to carry through the computation.  The term ${\cal F}(a)$ arises as a perhaps unfortunate side effect.

%
%
%
%

%
%
\medskip
\subsection {The discrete Schr\"{o}dinger operator }
\label{DSOsection}

In this section we report on results for the important special case of the discrete Schr\"{o}dinger operator
\beq 
T_n(f) = \begin{bmatrix} f(\frac{1}{n}) & -1 & 0 & \cdots & 0 \\
-1 & f(\frac{2}{n}) & -1 & \cdots & 0 \\
0 & -1 & f(\frac{3}{n}) & \cdots & 0 \\
 &  &  & \ddots
 \\
0 & 0 & 0 & \cdots & f(\frac{n}{n}) 
\end{bmatrix}
\label{DSOdef}
\eeq
In this subsection we use a slightly different notation from the rest of the paper, in order to be consistent with that of M. Kac, whose paper \cite{ka69} contains the results on which the results of this subsection are based on.  As the proofs, which are found in \cite{bomc16}, are somewhat lengthy and technical, they are omitted.  They are obtained by modifying Kac's proof in \cite{ka69} for $T_n(f)$ when $f$ is twice differentiable.

The first theorem  \eqref{First KMS} obviously holds for $T_n(f)$, as long as $f$ is Riemann integrable, and tells us that the eigenvalues accumulate like the values of $a(x,t) = f(x)-2\cos t$ sampled at regularly spaced points in $[0,1]\times [0,2\pi]$.  

The second theorem \eqref{Strong KMS} holds as long as $f\in C^{1+\alpha}([0,1])$ for some $\alpha > 0$.  Let
\[ D_n(f) = \det T_n(f)
\]
Then, in the notation of this section,
\[ \lim_{n\rightarrow\infty} \frac{ D_n(f)}{G(f)^n} = E(f)
\]
where
 \[G(f) = \exp \left\{ \int_0^1 \log\left(\frac{f(x) + \sqrt{f^2(x)-4}}{2}\right) dx \right\}
\]
is the geometric mean of $f(x)-2\cos t$, and $E(f)$ is given in terms of the series of Fourier coefficients of $f(x)-2\cos t$.  (The terms in \eqref{Strong KMS} have to be modified slightly for the different convention used by Kac.)
  
 \smallskip

In 1969 Kac \cite{ka69} derived a beautiful and simple formula for $E(f)$ for this case.  The following theorem, dealing with  families of matrices with any shift in the indexing, is obtained by modifying Kac's original proof.

\begin{theo}
\label{scaletheorem}
Let $f$ be twice differentiable on some open interval $I$ containing $[0,1]$.  Suppose $f$ has  a bounded second derivative and satisfies $f>2$.  Let $\ve\in \mathbb{R}$ and define 
the matrices
\[
T_{n}(f;\ve ) =  \begin{bmatrix} f(\frac{\ve }{n}) & -1 & 0 & \cdots & 0 \\
-1 & f(\frac{1+\ve }{n}) & -1 & \cdots & 0 \\
0 & -1 & f(\frac{2+\ve }{n}) & \cdots & 0 \\
 &  &  & \ddots
 \\
0 & 0 & 0 & \cdots & f(\frac{n-1+\ve }{n}) 
\end{bmatrix}
\]
Then 
\beq 
 \lim_{n \to \infty}  \frac{\det {T}_{n}(f;\ve)}{G(f)^n}   =\frac{\dsp \left( f(0)+\sqrt{f^2(0 ) - 4}\right)^{1-\ve} \left( f(1)+\sqrt{f^2(1 ) - 4}\right)^{\ve} }{ 2 \dsp
 {\sqrt[4]{(f^2(0)-4)(f^2(1)-4)}}}
\label{dsoe}
\eeq
\end{theo}

\medskip
\begin{rem}
As long as $f(0)\neq f(1)$, the above limit can be adjusted to any positive number just by choosing the correct shift $\ve$.  By Theorem~\ref{altindex2},  the LSD of the eigenvalues of $T_n(f;\ve)$ does not depend on $\ve$.  The formula \eqref{First KMS}   holds for $T_n(f;\ve)$ for any $\ve$:
\[ \lim_{n\rightarrow\infty} \frac{ \text{Tr}[\varphi(T_n(f;\ve ))]}{n} = \frac{1}{2\pi} \int_0^1 \int_0^{2\pi} \varphi(f(x)-2\cos t) \, dt \, dx 
\]
If one scales by $G(f)$, one thus obtains a family of matrices whose asymptotic eigenvalue distribution is invariant, but the determinant can be made to converge to any positive number.
\label{shiftremark}
\end{rem}


The above results can also be modified for the case when $f$ has a finite number of jump discontinuities.

\begin{theo}
(i) Let $f$ be twice differentiable on $[0,1]$, with a bounded second derivative,  except for  $r<\infty$ jump discontinuities at $c_1,\dots, c_r \in (0,1)$, where both sided limits exist and are finite, and $f$ is left-continuous at $c_j$: $f(c_j)=f(c_j-)$.  Suppose, also, that  $f>2+\epsilon$ for some $\epsilon > 0$.  Then

\beq
  \frac{D_n(f)}{G(f)^n} = \alpha \prod_{j=1}^r \beta_j \gamma_j^{\{nc_j\}} + o(1)
  \label{jumpeqn}
\eeq
where $\{x\}=x-\lfloor x \rfloor$ is the fractional part of $x$,
\begin{align*}
\alpha &= \frac{1}{2} \frac{f(1)+\sqrt{f^2(1) - 4}}{\sqrt[4]{(f^2(0)-4)(f^2(1)-4)}} 
\\[.1in]
\beta_j &= \frac{f(c_j-)-f(c_j+) + \sqrt{f^2(c_j+)-4} + \sqrt{f^2(c_j-)-4}}{2\sqrt[4]{(f^2(c_j+)-4)(f^2(c_j-)-4)}}
 \\[.1in]
\mbox{and } \quad \gamma_j &= \frac{f(c_j+)+\sqrt{f^2(c_j+)-4}}{f(c_j-)+\sqrt{f^2(c_j-)-4}}
\end{align*}

\smallskip
\noindent
(ii) If $f$ is right-continuous at $c_j$, then the formula \eqref{jumpeqn} holds with $\{c_jn\}$ replaced by $\{c_jn\}'$, where 
\[ \{x\}' =
1+x-\lceil x\rceil
\]
is the fractional part of $x$, but equal to $1$ if $x$ is an integer.
\label{jumptheorem}
\end{theo}

\begin{rem}
Note that $\beta_j$ and $\gamma_j$ are $1$ if $f$ is continuous at $c_j$, so \eqref{jumpeqn} reduces to \eqref{dsoe} for $\ve=1$ when $f$ is twice differentiable.  Since $\{ c_jn\} =\{c_jn\}'$ if $c_j$ is irrational,  the difference between cases (i) and (ii) of the above theorem only occurs when $c_j$ is rational.  In that case the difference arises when $f$ is evaluated at the point $c_j$.  
\end{rem}

\begin{rem}
Obviously, if there is a discontinuity in $f$, the limit
\[ \lim_{n\rightarrow\infty} \frac{D_n(f)}{G(f)^n}
\]
does not exist.  However, we can calculate the $\limsup$ and $\liminf$.  For example, if there is one jump discontinuity at  $c=p/q$,
\begin{align*}
\limsup_{n\rightarrow\infty} \frac{D_n(f)}{G(f)^n} &= \alpha\cdot \beta\cdot \max\{ \gamma^{1/q}, \gamma\}
\\
\liminf_{n\rightarrow\infty} \frac{D_n(f)}{G(f)^n} &= \alpha\cdot \beta \cdot \min\{ \gamma^{1/q}, \gamma\}
\end{align*}
If $c$ is irrational, the same is true with $\gamma^{1/q}$ replaced by $1$.  Analogous statements hold when there are $r$ jump discontinuities.
\end{rem}

To illustrate the asymptotic behavior of $D_n(f)/G(f)^n$, we
consider the case of a single jump discontinuity at $c\in (0,1)$.
If $c =p/q$ is rational,  $\{D_n(f)/G(f)^n\}$ (modulo an $o(1)$ term) is cyclic of order $q$.    When $c$ is irrational, $\{D_n(f)/G(f)^n\}$ is dense on the interval  between $\alpha\beta$ and $\alpha\beta\gamma$.  This is another indication of how exquisitely sensitive  $D_n(f)/G(f)^n$ is.  The slightest irrational perturbation of the point of discontinuity from $c=1/2$, causes the values of $D_n(f)/G(f)^n$ (modulo the $o(1)$ term) to go from alternating between two values to taking on infinitely many values.  This behavior is illustrated in figure~\ref{jumpfig}.  There we calculate $D_n(f)/G(f)^n$ for the piecewise function
\beq 
f(x) = \begin{cases} 3+x^2 +\sqrt{x} \sin(13 x) & x < c \\
4.5-\cos(20x)/x & x \geq c \end{cases}
\label{ff}
\eeq
We compare the values of $D_n(f)/G(f)^n$ with $\alpha\beta\gamma^{\{cn\}'}$ in the case when $c$ is rational and $c$ is irrational.  Agreement is quite good for moderately large $n$.

\begin{figure}[h]
\begin{center}
%
\includegraphics*[width=2.3in]{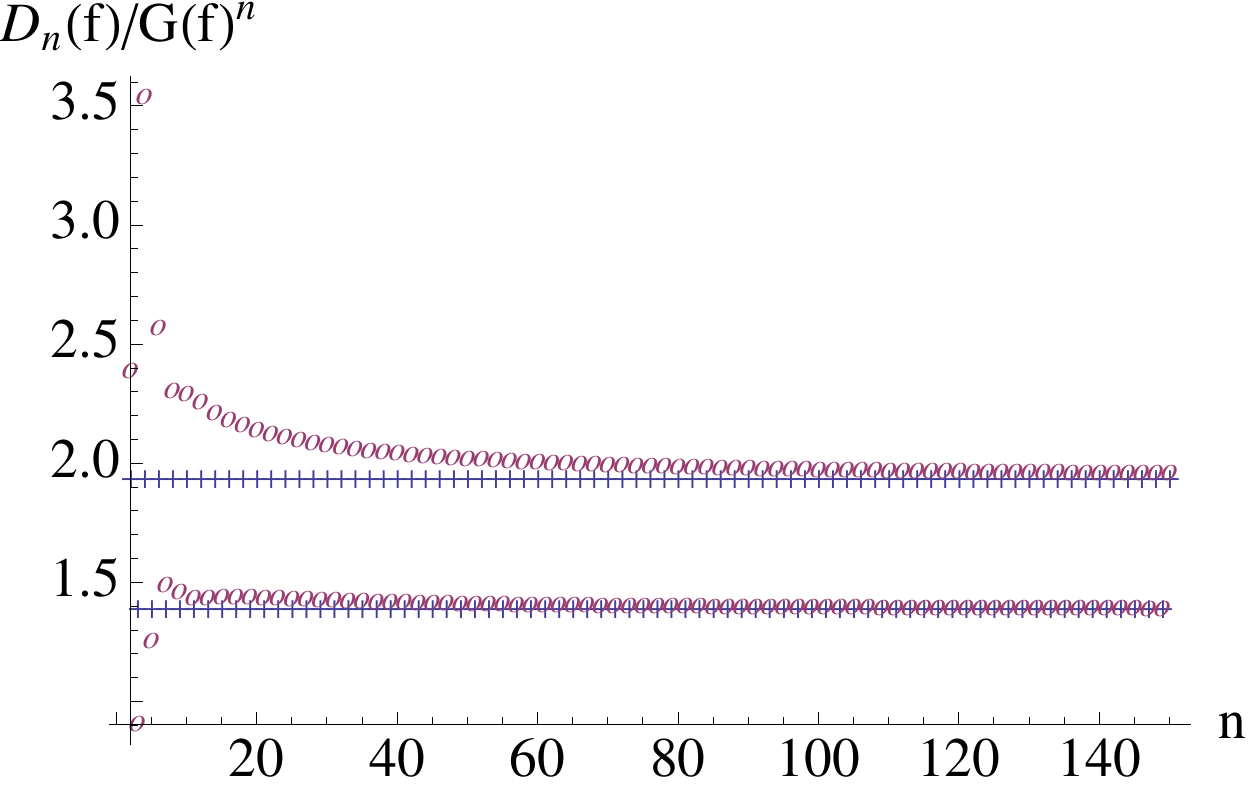}
\includegraphics*[width=2.3in]{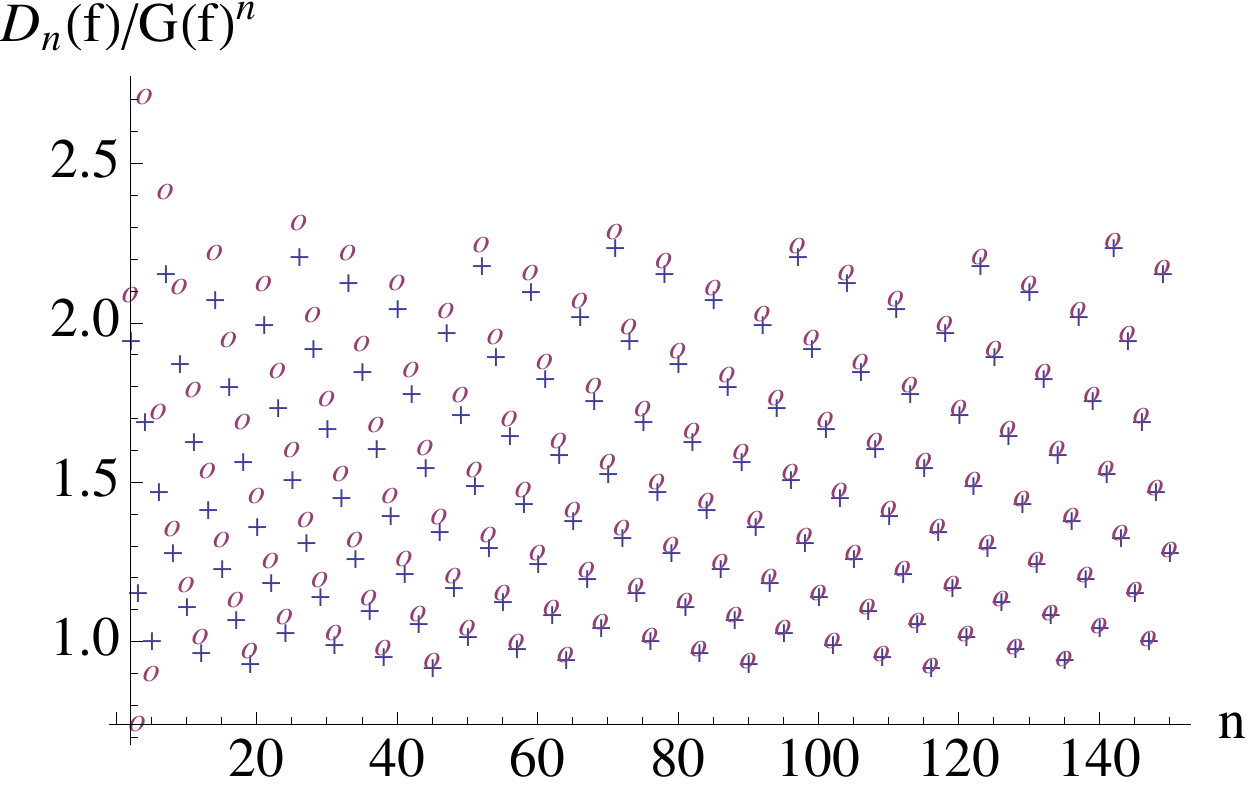}
\end{center}
\caption{$n$ vs. $D_n(f)/G(f)^n$  for $f$ as in \eqref{ff}.  Left: $c=1/2$; right: $c=.9-1.5/\pi\approx .4225$.   The values of $D_n(f)/G(f)^n$ are marked with circles;  the values of $\alpha\beta\gamma^{\{cn\}'}$ are marked with $+$'s.  }
\label{jumpfig}
\end{figure}

\section{Some open problems, conjectures and other remarks}
\label{conclusion}

It is perhaps not surprising that Szeg\H{o}'s limit theorems can be generalized to KMS matrices.  These matrices begin to look very much like Toeplitz matrices, locally, when $n$ is large.  There are a whole host of problems revolving around generalizing other results for Toeplitz matrices to KMS matrices.  While some results for Toeplitz matrices have natural generalizations to KMS matrices, there are some challenges for KMS matrices that are fundamentally different from their Toeplitz counterparts.  

\smallskip

The most powerful results for KMS matrices are for when the matrices are normal.  In this case, we can derive the LSD.  In effect, we can completely characterize how the spectrum behaves in the limit of large $n$.  When $T_n(a)$ is not normal, all we can do is delimit a region in the complex plane where most of the eigenvalues will lie.  And, as figure~\ref{lame_ex} shows, this estimate may not give much detail for the spectrum.
  Normal matrices also have other desirable properties such as numerical stability that are lacking in non-normal matrices.  For these reasons, it is of interest to be able to characterize when a matrix, or sequence of matrices, is normal.  For KMS matrices, it is easy to tell if they are Hermitian:  $T_n(a)$ is Hermitian if and only of the symbol $a(x,t)$ is real valued.  A condition along these lines for normality would be quite valuable.
  
\smallskip

By a result of Brown  and Halmos \cite{brha64} (see also \cite{nore09,nore11}), a Toeplitz matrix is normal if and only if its symbol $b$ can be obtained by rotation and translation of a real-valued symbol $a$, i.e. $b(t)=c+e^{i\theta} a(t)$.  (It follows that the spectrum of a normal Toeplitz matrix lies on a line in the complex plane.)
However, no such characterization for KMS matrices has been found.   In fact, we were unable to construct sequences of normal KMS matrices outside the obvious Toeplitz and Hermitian ones.  This leads to our first problem:

\bprob
Characterize normal KMS matrices.  Give a condition on $a(x,t)$ to guarantee that $T_n(a)$ will be normal.
\eprob

It has long been known that some non-normal Toeplitz matrices are so-called \textit{canonically distributed}, as Widom calls them.  This means that the spectra accumulate on the range of the symbols.  The question of which Toeplitz matrices are canonically distributed is tied to the types of singularities in the symbol \cite{wi90, wi94}.
Thus, another avenue of research  is to study asymptotics when  the symbol $a(x,t)$ has a singularity in $t$.  The famous Fisher-Hartwig conjecture was recently solved for Toeplitz matrices \cite{deitkr11}.  The first step is the following:
\bprob
Formulate a generalized Fisher-Hartwig conjecture for KMS matrices.
\eprob
\noindent
The second step, obviously, is to prove the conjecture.

\smallskip

In the general non-normal, non canonically distributed case, the spectrum of KMS matrices still appears to have some structure.  Schmidt and Spitzer \cite{scsp60} showed that if $T_n(a)$ is Toeplitz where $a(t)$ is a trigonometric polynomial, then the  spectrum of $T_n(a)$ accumulates on a set $\Lambda(a)$ consisting of the union of a finite number of pairwise disjoint open analytic arcs and a finite number of exceptional points (branch points and points $\lambda$ such that for some open neighborhood $U$ of $\lambda$, $\Lambda(a)\cap U$ is an analytic arc starting and terminating on $\partial U$). Ullman \cite{ul67} proved that $\Lambda(a)$ is connected.
Based on numerical experiments such as that seen in figure~\ref{cluster_ex},
 we conjecture that the same holds for KMS matrices when the symbol is smooth in $x$.  Hirschmann \cite{hi67} obtained an implicit formula for the asymptotic density of eigenvalues for banded Toeplitz matrices.  It may be possible to do the same for KMS matrices:

\bprob
Determine the LSD for non-normal KMS matrices.
\eprob

\smallskip

Necessary conditions for the first and second theorems for KMS matrices are still lacking.
For the first theorem, it was Trotter who first proved that it was enough for the $\hat{a}_k$'s to be Riemann integrable.  In their paper on the Jacobi case, Kuijlaars and Serra Capizzano \cite{kuse01} reproved this result for tridiagonal KMS matrices using potential theory.  Their result was for coefficients satisfying a closeness condition for functions only in $L^1([0,1])$.  Clearly, it is not sufficient to take the $\hat{a}_k(x)$ to be only in $L^1([0,1])$.  However, it may be enough for the $\hat{a}_k$ to be in $L^1([0,1])$, with the condition that the set of discontinuities is nowhere dense.

\bprob
Determine necessary conditions on the $\hat{a}_k$ for the first theorem to hold.
\eprob

Consider the banded case where the symbol $a$ is a trigonometric polynomial 
\[ a(x,t) = \sum_{k=p}^q \hat{a}_k(x) e^{ikt}
\]
Then, by Corollary~\ref{STtrig}, as long as the $\hat{a}_k$ are $C^{1+\epsilon}$, 
the limit 
\[ \lim_{n\rightarrow\infty}\frac{\det T_n(a)}{G(a)^{n+1}}
\]
exists and can be calculated.  However, it is not known if this condition is necessary.  In figure~\ref{DSOfig} below we show the fraction $D_n(f)/G(f)^{n}$ for the discrete Schr\"{o}dinger operator \eqref{DSOdef} where $f$ has decreasing levels of regularity.  Numerical evidence suggests that it is not sufficient for $f(x)$ to be merely $C^1$, although we have no proof of this result.

\begin{figure}[h]
\begin{center}
\includegraphics*[width=1.6in]{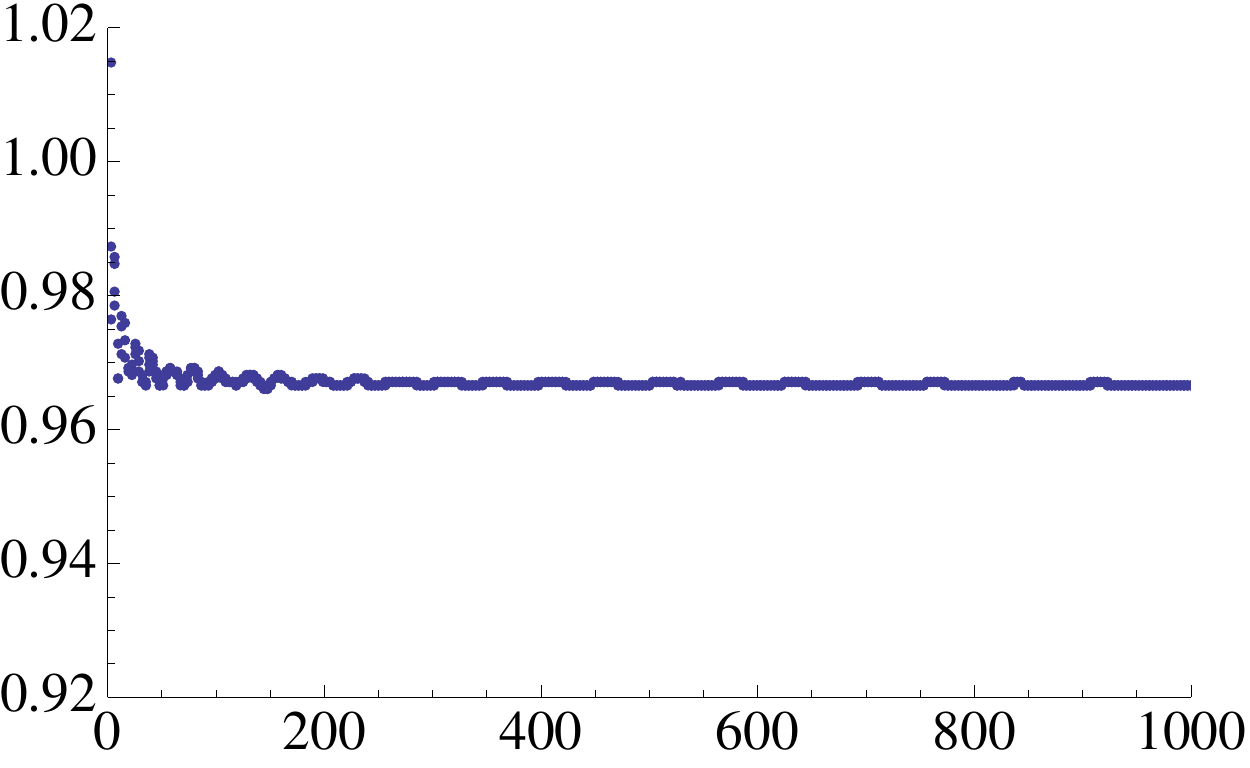}
\includegraphics*[width=1.6in]{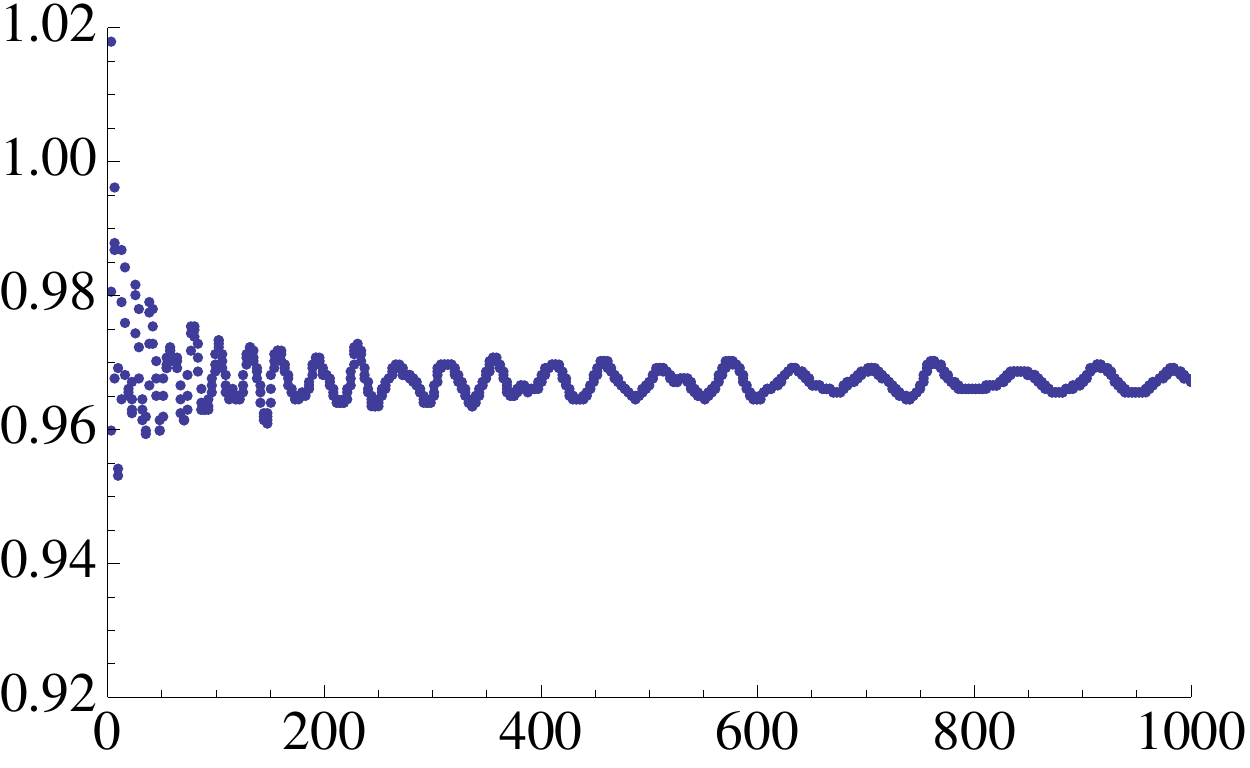}
\includegraphics*[width=1.6in]{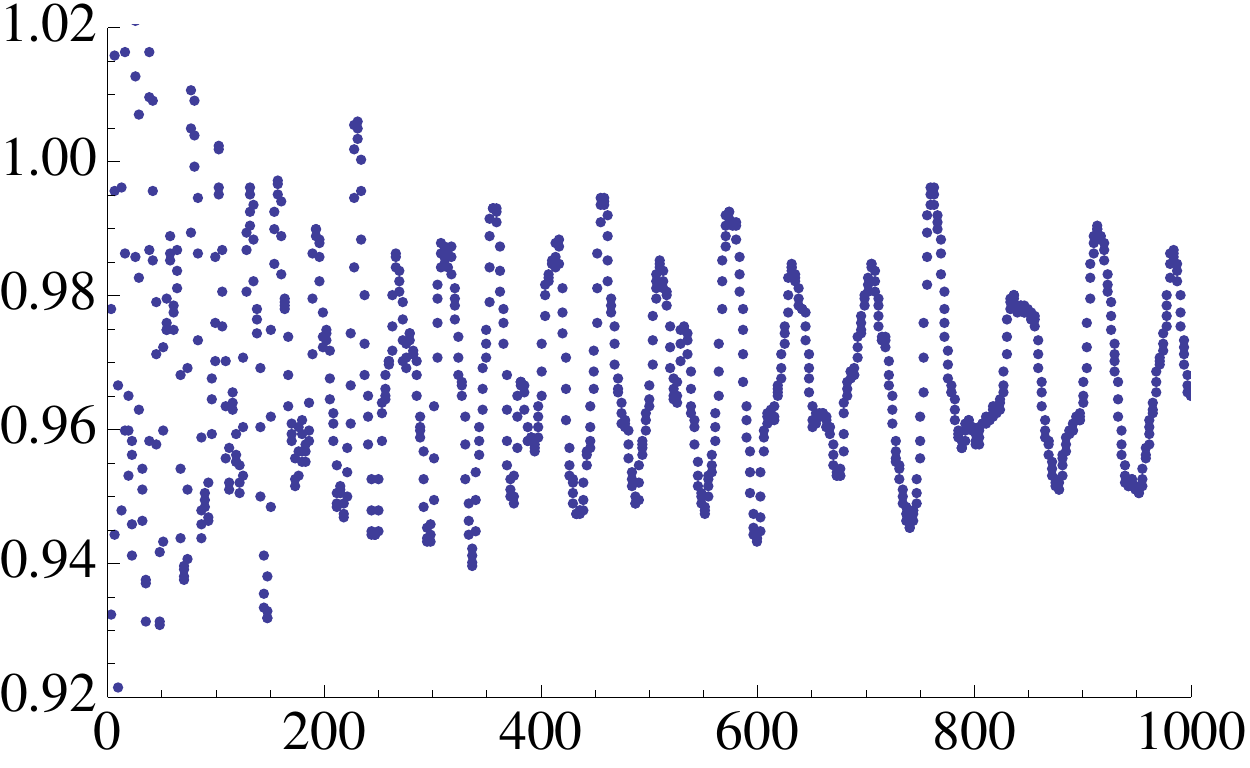}
\end{center}
\caption{$\dsp \frac{D_n(f)}{G(f)^{n}}$ for the discrete Schr\"{o}dinger operator \eqref{DSOdef}.  Left: $f(x)=3.5+x^2\sin(1/x)$; middle: $f(x)=3.5+x^{3/2}\sin(1/x)$; right: $f(x)=3.5+x\sin(1/x)$.}
\label{DSOfig}
\end{figure}

\bprob
Determine necessary and sufficient conditions on the $\hat{a}_k$ for the second theorem to hold.
\eprob

One of the shortcomings of the Strong Szeg\H{o} Theorem is that the error term is computed in terms of a series of the Fourier coefficients of the logarithm.  It can thus be somewhat opaque to deduce properties of the error based on the symbol itself. The only case where we have a simple formula for the error is  for the discrete Schr\"{o}dinger operator:  the formula \eqref{dsoe} in the continuous case, and \eqref{jumpeqn} when there are jump discontinuities. It may be possible to use similar methods to obtain results for other operators. 

\bprob
Extend the formulas \eqref{dsoe} and \eqref{jumpeqn} to tridiagonal KMS matrices. 
\eprob

For Toeplitz band matrices, the $o(1)$ error term in \eqref{Strong Szego1} is in fact $\Oo(q^n)$ for some $0<q<1$ (see e.g. \cite{bogr05}). Consequently, Szeg\H{o}'s Strong Theorem gives the complete asymptotic expansion of $ \det(T_n(a))$ when $a$ is a trigonometric polynomial. All of the existing proofs of the strong theorem for KMS matrices make use of the Euler-Maclaurin formula, and hence the $o(1)$ error term is the best known result. 

\bprob
Improve the error term in the strong theorem for KMS band matrices. 
\eprob

In applications (e.g. in determining stability) one is often interested in the bounds for the eigenvalues.  There is still the open problem:
\bprob
Determine the extreme eigenvalues of a sequence of KMS matrices.
\eprob

The present paper is concerned with the spectra of KMS matrices.  However, in applications it is often more important to know the pseudospectra.  Also of great importance are the eigenvectors of KMS matrices, of which we have said nothing.  While a great deal is known about the pseudospectra of Toeplitz matrices (see \cite{bogr05} and many references therein), not much is known about the pseudospectra of KMS matrices.  The paper of Trefethan and Chapman \cite{trch04} contains some interesting results on the pseudospectra and eigenvectors of KMS (called ``twisted Toeplitz'' in that paper) matrices.  However, there is much more to be discovered.  We can thus pose the somewhat general problem:

\bprob
Derive bounds on the pseudospectra of KMS matrices. 
\eprob

There is by now an enormous, and growing, literature on Toeplitz matrices.  Only a few of the results for Toeplitz matrices, importantly the first and second Szeg\H{o}'s theorems, have been extended to KMS matrices.  We look forward with great anticipation to the development of theory for these natural generalizations.

\end{document}